\numberwithin{equation}{section}
\newcommand{\margnote}[1]{
\ifthenelse{\boolean{shownotes}}%
{\marginpar{\raggedright\tiny\texttt{#1}}}%
{}%
}
\newcommand{\hole}[1]{
\ifthenelse{\boolean{shownotes}}%
{\begin{center} \fbox{ \rule {.25cm}{0cm}
\rule[-.1cm]{0cm}{.4cm} \parbox{.85\textwidth}{\begin{center}
\texttt{#1}\end{center}} \rule {.25cm}{0cm}}\end{center}}
{}
}
\theoremstyle{plain}
\newtheorem{lemma}{Lemma}[section]
\newtheorem{theorem}[lemma]{Theorem}
\newtheorem{corollary}[lemma]{Corollary}
\theoremstyle{definition}
\newtheorem{remark}[lemma]{Remark}
\newtheorem{definition}[lemma]{Definition}
\theoremstyle{remark}
\newcommand{\R}{\mathbb{R}}
\newcommand{\C}{\mathbb{C}}
\renewcommand{\Re}{\mathrm{Re}\,}
\newcommand{\<}{\langle}
\renewcommand{\>}{\rangle}
\newcommand{\bU}{\overline{U}}
\newcommand{\bpr}{\overline{p}_\rho}
\newcommand{\bpt}{\overline{p}_\theta}
\newcommand{\bet}{\overline{e}_\theta}
\newcommand{\bro}{\overline{\rho}}
\newcommand{\brt}{\overline{\theta}}
\newcommand{\bru}{\overline{u}}
\newcommand{\hU}{\widehat{U}}
\begin{document}

\title[Strict dissipativity of Cattaneo-Christov systems]{Strict dissipativity of Cattaneo-Christov systems for compressible fluid flow}

\author[F. Angeles]{Felipe Angeles}
 
\address{{\rm (F. Angeles)} Posgrado en Ciencias Matem\'{a}ticas\\Universidad Nacional Aut\'{o}noma 
de M\'{e}xico\\Circuito Exterior s/n, Ciudad de M\'{e}xico C.P. 04510 (Mexico)}

\email{teojkd@ciencias.unam.mx}

\author[C. M\'{a}laga]{Carlos M\'{a}laga}

\address{{\rm (C. M\'{a}laga)} Departamento de F\'{\i}sica\\Facultad de Ciencias\\Universidad Nacional Aut\'{o}noma de M\'{e}xico\\Circuito Exterior s/n, Ciudad Universitaria, Cd. Mx. C.P. 04510 (Mexico)}

\email{cmi@ciencias.unam.mx}

\author[R.G. Plaza]{Ram\'on G. Plaza}

\address{{\rm (R. G. Plaza)} Instituto de 
Investigaciones en Matem\'aticas Aplicadas y en Sistemas\\Universidad Nacional Aut\'onoma de 
M\'exico\\Circuito Escolar s/n, Ciudad de M\'{e}xico C.P. 04510 (Mexico)}

\email{plaza@mym.iimas.unam.mx}

\begin{abstract}
This work considers a compressible, viscous, heat-conducting fluid exhibiting thermal relaxation according to Christov's constitutive heat transfer law \cite{Chr09}, which is of Cattaneo type. The resulting evolution equations are known as Cattaneo-Christov systems. In this contribution, it is shown that Cattaneo-Christov systems for one-dimensional compressible fluid flow are strictly dissipative. The proof is based on the verification of a genuine coupling condition for hyperbolic-parabolic systems with viscous and relaxation effects combined as well as on showing the existence of compensating functions of the state variables in the sense of Shizuta and Kawashima \cite{ShKa85}. This property is used to obtain linear decay rates for solutions to the linearized equations around equilibrium states.
\end{abstract}

\keywords{Cattaneo-Christov systems; compressible flow; genuine coupling condition; strict dissipativity; thermal waves}

\subjclass[2010]{76N15, 35Q35, 35L60}

\maketitle

\setcounter{tocdepth}{1}

\section{Introduction}

One of the most important constitutive relations in continuum mechanics is Fourier's law of heat conduction. It states that, in an homogeneous, isotropic and thermally conducting medium, the heat flux $\boldsymbol{q}$ is determined by
\[
\boldsymbol{q} = - \kappa \nabla \theta,
\]
where $\theta = \theta(x,t)$ denotes the absolute temperature at a point $x$ of the medium at time $t > 0$ and $\kappa > 0$ is the thermal conductivity. Fourier's law is also a key ingredient in the compressible Navier-Stokes system of equations that describes the dynamics of a viscous compressible heat-conducting fluid (cf. \cite{Da4e}), inasmuch as the equation of conservation of energy underlies the heat transfer law proposed by Fourier. One of the main drawbacks of Fourier's constitutive law, however, is that it predicts infinite speed of propagation of heat, that is, thermal disturbances in a continuous medium will be felt instantly (although unequally) at all other points of the medium no matter how distant they are located. This unphysical behavior violates the well-established principle of \emph{causality} in continuum mechanics. 

Even though Fourier's law has been widely and successfully used to approximate the phenomenon of heat propagation in continuous media, other models have been proposed to correct this unrealistic feature. One of the best known is the \emph{Cattaneo-Maxwell heat transfer law} (see, e.g., \cite{JoPr89}),
\begin{equation}
\label{CMlaw}
\tau \boldsymbol{q}_t + \boldsymbol{q} = - \kappa \nabla \theta,
\end{equation}
where $\boldsymbol{q}_t = \partial \boldsymbol{q}/\partial t$ denotes the partial time-derivative of the heat flux and $\tau > 0$ is a constant. In the constitutive equation \eqref{CMlaw} (which can be traced back to the work of Maxwell \cite{Maxw1867} and was later reformulated by Cattaneo \cite{Catt49}), the parameter $\tau$ plays the role of an intrinsic relaxation time, or the time lag required for heat conduction to happen within a volume element once the temperature gradient has been established. Thus, this new term represents some sort of ``thermal inertia". Under this modification of Fourier's law, the flow of heat within the medium does not occur instantaneously but through the propagation of thermal waves with finite speed, a phenomenon known as \emph{second sound} (cf. \cite{Jrd14,LiSt78,Strau11}).

Even though Maxwell-Cattaneo heat transfer law preserves the causality principle for heat propagation in \emph{steady} continuous media, it is incompatible with the Galilean postulate of frame-indifference when the medium is in motion. Christov and Jordan \cite{ChJo05} have shown, for instance, that equation \eqref{CMlaw} violates this fundamental principle of classical mechanics and leads to paradoxical descriptions of the evolution of thermal waves. The reason is simple: thermal inertia should be a property of the material point, and the rate of change of the heat flux with respect of time must be the result of a change in the geometrical point (the partial time derivative) plus a change due to the transport of material quantities if the medium is in motion. Consequently, Christov and Jordan propose that the partial time derivative in \eqref{CMlaw} should be replaced by a \emph{material} derivative. Under this viewpoint, Christov \cite{Chr09} formulated a material, frame-indifferent version of the Cattaneo-Maxwell law that replaces the partial time derivative of the heat flux by a Lie-Oldroyd upper convected material derivative (cf. \cite{Old50}). It reads
\begin{equation}
\label{Christovlaw}
\tau \Big( \boldsymbol{q}_t + (\boldsymbol{u} \cdot \nabla) \boldsymbol{q} + (\nabla \cdot \boldsymbol{u}) \boldsymbol{q}- (\boldsymbol{q} \cdot \nabla) \boldsymbol{u} \Big) + \boldsymbol{q} = - \kappa \nabla \theta,
\end{equation}
where the vector $\boldsymbol{u} = \boldsymbol{u}(x,t)$ is velocity field of the medium. 

Coupling thermal relaxation of Cattaneo's type with the description of fluid flow has attracted the attention of the scientific community for a long time (see, e.g., \cite{CaMo72,JoPr89,LiSt78,Strau11}). In the description of moving continuous media, it is fundamental to consider a heat transfer law which, like \eqref{Christovlaw}, preserves the objectivity principle of Galileo. Hence, it is natural to couple Christov's constitutive law with the basic balance laws of mass, momentum and energy. The result is a system of equations of non-conservative type, due to the fact that now the heat flux variable $\boldsymbol{q}$ satisfies an evolutionary equation (the constitutive law) which does not express a balance law. In the context of moving continuous media, Christov's constitutive law \eqref{Christovlaw} has been recently investigated in the study of incompressible fluid flow \cite{TiZa11,Strau10b}, viscoelastic solids \cite{Morr10} and of compressible fluids with applications to acoustic wave propagation \cite{Jrd14,Strau10a}. As Straughan \cite{Strau10a} and Christov \cite{Chr09} point out, it is important to test this new model.

In this paper, we consider a compressible, viscous, heat-conducting fluid exhibiting thermal relaxation according to Christov's constitutive heat transfer law \eqref{Christovlaw}. We refer to the resulting equations as \textit{Cattaneo-Christov systems}. Our investigation focuses on one of the most important properties of this type of evolutionary systems of partial differential equations: their \emph{strict dissipativity}, or, in lay terms, the property that solutions to the linearized problem around equilibrium states show some decay structure (see the precise statement in section \ref{secgencoup} below). In physical terms, this property is tantamount to requiring that the dissipation terms do not allow solutions of traveling wave type to be, simultaneously, solutions to the associated hyperbolic system without dissipation. This characterization of strict dissipativity, known as \textit{genuine coupling}, has been extensively studied by Kawashima, Shizuta and collaborators \cite{Ka86,KaSh88b,KY04,ShKa85}. In the well-known case of the compressible Navier-Stokes system, for example, the terms due to viscosity and to thermal diffusion (Fourier's law) make the system strictly dissipative (cf. Shizuta and Kawashima \cite{ShKa85}). In the case of Cattaneo-Christov systems, thermal relaxation and viscous terms account for dissipation effects. Are these terms truly dissipative? Our contribution is to answer this question in the positive for one-dimensional Cattaneo-Christov systems. We establish strict dissipativity in two cases: on one hand, for the system of equations with viscosity and thermal relaxation terms combined, and on the other, for the inviscid counterpart in which thermal relaxation is the only dissipative term and the viscosity coefficients are set to zero. For that purpose, we first recast the system of equations as a quasi-linear system for which a symmetrizer can be found. Once the system is put into symmetric form, it is shown that the dissipation and the hyperbolic terms are genuinely coupled. Furthermore, we explicitly show the existence of \textit{compensating matrix functions} (cf. \cite{Hu05,ShKa85}) of the state variables which allow, in turn, to verify directly the strict dissipativity of the one-dimensional Cattaneo-Christov system and to establish energy estimates that yield the decay of solutions to the linearized problem around equilibrium states. These results apply to both the thermally relaxed fluid without viscosity and to the case with relaxation and viscosity effects combined.

\subsection*{Plan of the paper} In section \ref{secmodel} the Cattaneo-Christov model for one-dimensional compressible flow is introduced and recast as a system of equations in quasi-linear form. Section \ref{sechyp} contains the verification of the hyperbolicity of the system in the absence of dissipation terms and it is shown that Cattaneo-Christov systems are symmetrizable in one spatial dimension. In section \ref{secgencoup}, the genuine coupling condition of Kawashima and Shizuta, as well as the equivalence theorem for symmetric systems, are recalled. Moreover, it is shown that Cattaneo-Christov systems are genuinely coupled. Explicit forms of compensating functions in both the viscous and thermally relaxed cases are also provided via direct inspection. Section \ref{seclindec} contains the derivation of decay rates for solutions to the linearized system around equilibrium states. The paper ends with further discussion on the results and their possible extensions.

\section{Cattaneo-Christov systems for compressible fluid flow}
\label{secmodel}

Consider the basic equations for a compressible, viscous, heat conducting fluid in one dimensional space,
\begin{equation}
\label{eqbasics}
\begin{aligned}
\rho_t + (\rho u)_x &= 0,\\
(\rho u)_t + (\rho u^2)_x &= {\sigma}_x, \\
\Big( \rho ( e + \tfrac{1}{2} u^2)\Big)_t + \Big( \rho u ( e + \tfrac{1}{2} u^2)\Big)_x &= ({\sigma} u)_x - q_x,
\end{aligned}
\end{equation}
where $x \in \R$ and $t > 0$. According to custom, $\rho$ and $u$ denote the mass density and the velocity of the fluid, respectively, whereas $\sigma$ is the stress and has the form,
\begin{equation}
\label{eqstress}
\sigma = (2 \mu + \lambda) u_x - p.
\end{equation}
$p$ is the thermodynamic pressure, $e$ denotes the internal energy density, $q$ is the heat flux and the viscosity coefficients, $\lambda$ and $\mu$, satisfy the inequalities
\begin{equation}
\label{eqvisccoeff}
\mu \geq 0, \quad \tfrac{2}{3} \mu + \lambda \geq 0.
\end{equation}

The heat flux satisfies a constitutive relation that has the form of a heat transfer law. Instead of the usual Fourier's law, namely $q = - \kappa \theta_x$, where $\kappa = \kappa(\rho,\theta) > 0$ is the heat conductivity coefficient, we shall assume that the fluid, along with the property of conducting heat, exhibits thermal relaxation according to the following constituive equation,
\begin{equation}
\label{eqCC}
\tau \big(q_t +  u q_x\big) + q = - \kappa \theta_x,
\end{equation} 
where $\tau > 0$ is a constant characteristic relaxation time. Equation \eqref{eqCC} is a modification of the Cattaneo-Maxwell transfer law, namely, $\tau q_t + q = - \kappa \theta_x$. It is the one-dimensional version of the frame-indifferent material constitutive law \eqref{Christovlaw} proposed by Christov in \cite{Chr09}. The evolution of the flow is thus governed by the three balance laws for mass, momentum and energy \eqref{eqbasics} and the constitutive evolution equation \eqref{eqCC}. Of course, the system should be closed by an equation of the state for the fluid under consideration that determines the form of $p$ and $e$. In this paper, we make the following assumptions about the fluid:
\begin{equation}
\label{H1}
\tag{H$_1$}
\begin{minipage}[c]{4.0in}
The independent thermodynamic quantities are the mass density, $\rho > 0$, and the absolute temperature, $\theta > 0$. They vary within the domain
\[
\mathcal{D} := \{ (\rho, \theta) \in \R^2 \, : \, \rho > 0, \, \theta > 0 \}.
\]
\end{minipage}
\end{equation}

\bigskip

\begin{equation}
\label{H2}
\tag{H$_2$}
\begin{minipage}[c]{4.0in}
The pressure $p$, the internal energy density $e$, the heat conductivity coefficient $\kappa$ and the viscosity coefficients $\lambda$ and $\mu$ are smooth functions of $(\rho,\theta) \in \mathcal{D}$,
\[
p, e, \lambda, \mu, \kappa \in C^\infty(\mathcal{D}).
\]
In addition, $\lambda$ and $\mu$ satisfy inequalities \eqref{eqvisccoeff} and $\kappa > 0$ for all $(\rho,\theta) \in \mathcal{D}$.
\end{minipage}
\end{equation}

\bigskip

\begin{equation}
\label{H3}
\tag{H$_3$}
\begin{minipage}[c]{4.0in}
The fluid satisfies the following conditions:
\[
p> 0, \; \; p_\rho > 0, \;\; p_\theta > 0,\;\;e_\theta > 0, \qquad \text{for all} \;\; (\rho,\theta) \in \mathcal{D}.
\]
\end{minipage}
\end{equation}

\bigskip

Finally, for convenience in notation we define the combined viscosity coefficient $\nu \in C^\infty(\mathcal{D})$ as 
\[
\nu(\rho,\theta) := 2 \mu + \lambda.
\]
Notice that $\nu \geq 0$ on $\mathcal{D}$ in view of \eqref{eqvisccoeff}. 

\begin{remark}
Assumption \eqref{H3} is clearly satisfied by an ideal gas that satisfies Boyle's law,
\[
p(\rho, \theta) = R \rho \theta, \qquad e(\rho, \theta) = \frac{R  \theta}{\gamma -1},
\]
where $R > 0$ is the universal gas constant and $\gamma > 1$ is the adiabatic exponent. Hypotheses \eqref{H3} are, of course, more general and applicable to compressible fluids satisfying the standard assumptions of Weyl \cite{We49}, namely, adiabatic increase of pressure effects compression ($p_\rho > 0$), a generalized Gay-Lussac's law ($p_\theta > 0$) and the increase of internal energy due to an increase of temperature at constant volume ($e_\theta > 0$). 
\end{remark}

In this work we consider the basic equations \eqref{eqbasics} of conservation of mass, momentum and energy, coupled together with the evolution equation for the heat flux \eqref{eqCC}. As a result, we obtain the following quasi-linear system of equations 
\begin{equation}
\label{CCvisc}
\begin{aligned}
\rho_t + (\rho u)_x &= 0,\\
(\rho u)_t + (\rho u^2 + p)_x &=  \big( \nu u_x \big)_x, \\
\big( \rho ( e + \tfrac{1}{2} u^2)\big)_t + \big( \rho u ( e + \tfrac{1}{2} u^2)\big)_x &= (-p u + \nu u u_x)_x - q_x,\\
\tau q_t + \tau u q_x + q &= - \kappa \theta_x.
\end{aligned}
\end{equation}

In the case when $\nu > 0$ for all $(\rho,\theta) \in \mathcal{D}$, we call this system the \emph{viscous Cattaneo-Christov system for compressible fluid flow}. We shall distinguish between the viscous ($\nu > 0$) and the pure thermally relaxed system where $\nu \equiv 0$ for all $(\rho, \theta) \in \mathcal{D}$, that reads,
\begin{equation}
\label{CCrel}
\begin{aligned}
\rho_t + (\rho u)_x &= 0,\\
(\rho u)_t + (\rho u^2 + p)_x &=  0, \\
\big( \rho ( e + \tfrac{1}{2} u^2)\big)_t + \big( \rho u ( e + \tfrac{1}{2} u^2)\big)_x &= -(p u)_x - q_x,\\
\tau q_t + \tau u q_x + q &= - \kappa \theta_x.
\end{aligned}
\end{equation}
We denote system \eqref{CCrel} as the \emph{inviscid Cattaneo-Christov system}. These inviscid, thermally relaxed compressible fluids have been coined by Straughan as \emph{Cattaneo-Christov gases} \cite{Strau10a}.

In this paper we establish that, under the generic assumptions \eqref{H1} - \eqref{H3}, both systems are dissipative in a precise sense as we shall see below.

In the sequel, we denote $U = (\rho, u, \theta, q)^\top \in \mathcal{U} \subset \R^4$ as the vector of state variables, defined on the convex, open set
\begin{equation}
\label{defsetU}
\mathcal{U} := \{ (\rho, u, \theta, q)^\top \in \R^4 \, : \, \rho > 0, \, \theta > 0 \},
\end{equation}
known as the \emph{state space}.

Using the well-known thermodynamic relation $\theta p_\theta = p - \rho^2 e_\rho$ (see, e.g., \cite{Bt}, pg. 42) and after some algebra, we recast \eqref{CCvisc} as the following quasi-linear system for the state variables $U \in \mathcal{U}$,
\begin{equation}
\label{eqfullsyst}
A^0(U) U_t + A^1(U) U_x =  B(U) U_{xx}   + Q(U) + G(U, U_x),
\end{equation}
where
\[
A^0(U) := \begin{pmatrix}
1 & 0 & 0 & 0 \\ 0 & \rho & 0 & 0 \\ 0 & 0 & \rho e_\theta & 0 \\ 0 & 0 & 0 & \tau
\end{pmatrix}, \qquad 
A^1(U) := \begin{pmatrix}
u & \rho & 0 & 0 \\ p_\rho & \rho u & p_\theta & 0 \\ 0 &  \theta p_\theta & \rho u e_\theta & 1 \\ 0 & 0 & \kappa & \tau u
\end{pmatrix},
\]
\[
B(U) := \begin{pmatrix}
0 & 0 & 0 & 0 \\ 0 & \nu & 0 & 0 \\ 0 & 0 & 0 & 0 \\ 0 & 0 & 0 & 0
\end{pmatrix}, \qquad
Q(U) := \begin{pmatrix}
0 \\ 0 \\ 0 \\ -q
\end{pmatrix},
\] 
and $G(U,U_x)$ contains higher order (fully nonlinear) terms,
\begin{equation}
\label{defG}
G(U,U_x) = \begin{pmatrix}
0 \\ \nu_x u_x \\ \nu u_x^2 \\ 0 
\end{pmatrix} = O(|U_x|^2).
\end{equation}

Notice that $A^0, A^1, B \in C^\infty(\mathcal{U};\R^{4 \times 4})$, $Q \in C^\infty(\mathcal{U};\R^4)$, $G \in C^\infty(\mathcal{U} \times \R^4 ; \R^4)$. In view of hypotheses \eqref{H1} - \eqref{H3}, it is clear that for each $U \in \mathcal{U}$, $A^0(U) > 0$ is positive definite and hence, invertible, whereas $B(U) \geq 0$ is positive semi-definite. In the case where $\nu \equiv 0$ for all $(\rho, \theta) \in \mathcal{D}$ we recover the inviscid, thermally relaxed system \eqref{CCrel}, for which $B \equiv 0$.

\begin{remark}
Observe that the heat flux $q$ is regarded as a state variable and, thus, the constitutive heat transfer law \eqref{eqCC} is part of the time-dependent equations that determines the evolution of the system. As a result, system \eqref{eqfullsyst} is not expressed in conservation form. Instead, it is a quasi-linear, non-conservative system of equations with dissipation effects represented by viscosity (the term $B(U)U_{xx}$) and production terms due to relaxation (the thermal relaxation term $Q(U)$). 
\end{remark}

As in the study of systems of conservation laws with relaxation \cite{Da4e,L4}, the large time behavior of solutions is determined by a ``relaxed" structure, chosen so that the dynamics leads solutions towards an \emph{equilibrium manifold}. In quasilinear systems of the form \eqref{eqfullsyst}, the equilibrium manifold $\mathcal{V} \subset \R^4$ is defined as
\[
\mathcal{V} = \{ U \in \mathcal{U} \, : \, Q(U) = 0\}.
\]
Mimicking discrete kinetic theory \cite{Ka84}, the \emph{space of collision invariants} is defined as
\[
\mathcal{M} = \{ \psi \in \R^4 \, : \, \psi^\top Q(U) = 0, \; \text{for any } \, U \in \mathcal{U}\} \subset \R^4.
\]
A solution $U = U(x,t)$ to system \eqref{eqfullsyst} is an \emph{equilibrium solution} (or a \emph{Maxwellian}) if it lies on the equilibrium manifold, that is, if $Q(U(x,t)) = 0$ for all $x \in \R$, $t > 0$. Clearly, any constant state in the equilibrium manifold, $\overline{U} \in \mathcal{V}$, is an equilibrium solution. In the case of the Cattaneo-Christov system \eqref{eqfullsyst} the equilibrium manifold is given by
\begin{equation}
\label{defV}
\mathcal{V} = \{ (\rho, u, \theta, q)^\top \in \R^4 \, : \, \rho > 0, \, \theta > 0, \, q = 0 \},
\end{equation}
that is, it corresponds to the states with zero heat flux. Also particular to the Cattaneo-Christov system is the following property, $\mathcal{V} = \mathcal{M} \cap \mathcal{U}$, as the reader may easily verify.

\section{Hyperbolicity and symmetrizability}
\label{sechyp}

\subsection{Hyperbolicity}

Let us consider the system
\begin{equation}
\label{hypsyst}
A^0(U) U_t + A^1(U) U_x = 0,
\end{equation}
which results from neglecting thermal relaxation and dissipation due to viscosity in \eqref{eqfullsyst}. For any state $U \in \mathcal{U}$, \eqref{hypsyst} is a quasi-linear, strictly hyperbolic first order system. Although hyperbolicity has been mentioned before as a property of this ``inviscid" Cattaneo-Christov system in one dimension  (see, for instance, \cite{Jrd14} and the references therein), for the sake of completeness we verify this fact by computing its characteristic speeds which (apparently) have not been reported before in the literature. For any $U \in \mathcal{U}$, set
\begin{equation}
\label{chardet}
\pi(\zeta) = \det \Big( A^1(U) - \zeta A^0(U) \Big).
\end{equation}

The roots of $\pi(\zeta) = 0$ are called the \emph{characteristic speeds} of system \eqref{hypsyst}. If these roots are all real and different then it is said that the system \eqref{hypsyst} is strictly hyperbolic at $U \in \mathcal{U}$. 
\begin{remark}
We remind the reader that the notion of hyperbolicity is motivated by the existence of traveling wave solutions to system \eqref{hypsyst} of the form $U(x,t) = \varphi(x - st)$, for some real propagating speed $s \in \R$ and a profile vector function $\varphi$. Substitution yields the spectral problem
\begin{equation}
\label{tws}
(A^1(\varphi) - s A^0(\varphi) ) \varphi' = 0,
\end{equation}
with eigenvalue $s \in \R$ and eigenfunction $\varphi'$, which leads directly to the characteristic equation \eqref{chardet}. 
\end{remark}
After a straightforward computation we see that
\[
\pi(\zeta) = \det \begin{pmatrix}
u - \zeta & \rho & 0 & 0 \\ p_\rho & \rho(u - \zeta) & p_\theta & 0 \\ 0 & \theta p_\theta & \rho e_\theta (u - \zeta) & 1 \\ 0 & 0 & \kappa & \tau (u - \zeta)
\end{pmatrix}.
\]
Let us denote $m = u - \zeta$ and make the computations to arrive at
\[
\pi(\zeta) = \rho (m^2 - p_\rho) (\rho e_\theta \tau m^2 - \kappa) - \theta p_\theta^2 \tau m^2.
\]
This is a second order polynomial in $m^2$. Therefore, we have that $\pi(\zeta) = 0$ if and only if
\[
m^4 + \widetilde{b} m^2 + \widetilde{c} = 0,
\]
where
\[
\widetilde{b} = - (\rho^2 e_\theta \tau)^{-1} ( \rho \kappa + \rho^2 p_\rho e_\theta \tau + \theta p_\theta^2 \tau), \qquad \widetilde{c} = (\rho^2 e_\theta \tau)^{-1} \rho p_\rho \kappa.
\]
Upon inspection of the discriminant
\[
\begin{aligned}
\Delta = \widetilde{b}^2 - 4 \widetilde{c} &= \left( p_\rho + \frac{\kappa}{\rho e_\theta \tau} + \frac{\theta p_\theta^2}{\rho^2 e_\theta} \right)^2 - \frac{4 \kappa p_\rho}{\rho e_\theta \tau} \\
&= \left( p_\rho - \frac{\kappa}{\rho e_\theta \tau} \right)^2 + \frac{\theta p_\theta^2}{\rho^2 e_\theta} \left( 2 p_\rho + \frac{2 \kappa}{\rho e_\theta \tau} + \frac{\theta p_\theta^2}{\rho^2 e_\theta}\right) > 0,
\end{aligned}
\]
we conclude that the $m^2$-roots are real and positive,
\[
0 < m_-^2 = \tfrac{1}{2} |\widetilde{b}| - \tfrac{1}{2} \sqrt{\, \widetilde{b}^2 - 4 \widetilde{c} \; } < m_+^2 = \tfrac{1}{2} |\widetilde{b}| + \tfrac{1}{2} \sqrt{\, \widetilde{b}^2 - 4 \widetilde{c} \; },
\]
yielding the characteristic speeds
\[
\zeta_1 = u - \sqrt{m_+^2} < \zeta_2 = u - \sqrt{m_-^2} < \zeta_3 = u + \sqrt{m_-^2} < \zeta_4 = u + \sqrt{m_+^2}.
\]
We conclude that system \eqref{hypsyst} is strictly hyperbolic. We gather these observations into the following
\begin{lemma}
\label{lemcharspeed}
Under assumptions \eqref{H1} - \eqref{H3} and for each $U = (\rho, u, \theta, q)^\top \in \mathcal{U} \subset \R^4$, the first order system \eqref{hypsyst} is strictly hyperbolic at $U \in \mathcal{U}$ and the characteristic speeds are given by
\[
\begin{aligned}
\zeta_1(U) &= u - \frac{1}{\sqrt{2}} {\sqrt{ p_\rho + \frac{\kappa}{\rho e_\theta \tau} + \frac{\theta p_\theta^2}{\rho^2 e_\theta} \, + \, \sqrt{\left( p_\rho + \frac{\kappa}{\rho e_\theta \tau} + \frac{\theta p_\theta^2}{\rho^2 e_\theta} \right)^2 - \frac{4 \kappa p_\rho}{\rho e_\theta \tau} \;\;} \;\;}}, \\
\zeta_2(U) &= u - \frac{1}{\sqrt{2}} {\sqrt{ p_\rho + \frac{\kappa}{\rho e_\theta \tau} + \frac{\theta p_\theta^2}{\rho^2 e_\theta} \, - \, \sqrt{\left( p_\rho + \frac{\kappa}{\rho e_\theta \tau} + \frac{\theta p_\theta^2}{\rho^2 e_\theta} \right)^2 - \frac{4 \kappa p_\rho}{\rho e_\theta \tau} \;\;} \;\;}}, \\
\zeta_3(U) &= u + \frac{1}{\sqrt{2}} {\sqrt{ p_\rho + \frac{\kappa}{\rho e_\theta \tau} + \frac{\theta p_\theta^2}{\rho^2 e_\theta} \, - \, \sqrt{\left( p_\rho + \frac{\kappa}{\rho e_\theta \tau} + \frac{\theta p_\theta^2}{\rho^2 e_\theta} \right)^2 - \frac{4 \kappa p_\rho}{\rho e_\theta \tau} \;\;} \;\;}}, \\
\zeta_4(U) &= u + \frac{1}{\sqrt{2}} {\sqrt{ p_\rho + \frac{\kappa}{\rho e_\theta \tau} + \frac{\theta p_\theta^2}{\rho^2 e_\theta} \, + \, \sqrt{\left( p_\rho + \frac{\kappa}{\rho e_\theta \tau} + \frac{\theta p_\theta^2}{\rho^2 e_\theta} \right)^2 - \frac{4 \kappa p_\rho}{\rho e_\theta \tau} \;\;} \;\;}}.
\end{aligned}
\]
\end{lemma}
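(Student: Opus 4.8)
The plan is to compute the characteristic polynomial $\pi(\zeta)$ in \eqref{chardet} explicitly, show that it is a quadratic polynomial in the auxiliary variable $m^2$ with $m = u - \zeta$, and then verify that this quadratic has two distinct, strictly positive roots.

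First I would expand the $4\times 4$ determinant. Since $A^0(U)$ is diagonal, the matrix $A^1(U) - \zeta A^0(U)$ has the nearly tridiagonal shape displayed above, so a cofactor expansion along the first column (equivalently, along the last row) reduces the computation to $3\times 3$ and $2\times 2$ minors. Introducing $m = u - \zeta$ throughout collapses the answer into
\[
\pi(\zeta) = \rho (m^2 - p_\rho)(\rho e_\theta \tau m^2 - \kappa) - \theta p_\theta^2 \tau m^2,
\]
which is manifestly a quadratic in $m^2$. Dividing by the factor $\rho^2 e_\theta \tau$, which is positive by \eqref{H1}--\eqref{H3}, puts the equation $\pi(\zeta)=0$ in the monic form $m^4 + \widetilde b\, m^2 + \widetilde c = 0$ with $\widetilde b$ and $\widetilde c$ as recorded above.

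Next I would analyze the roots in $m^2$. By \eqref{H3} one has $p_\rho, p_\theta, e_\theta > 0$ and by \eqref{H2} one has $\kappa > 0$, whence $\widetilde c = \rho p_\rho \kappa / (\rho^2 e_\theta \tau) > 0$ and $\widetilde b < 0$ (a negative combination of positive quantities). Consequently, provided the discriminant $\Delta = \widetilde b^2 - 4\widetilde c$ is strictly positive, the two roots $m_\pm^2 = \tfrac12|\widetilde b| \pm \tfrac12\sqrt{\Delta}$ are real, distinct, and both strictly positive, since their product $\widetilde c$ and their sum $|\widetilde b|$ are both positive. The one genuinely delicate point — and the only real obstacle — is establishing $\Delta > 0$. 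For that I would complete the square and recognize the algebraic identity
\[
\Delta = \left( p_\rho - \frac{\kappa}{\rho e_\theta \tau} \right)^2 + \frac{\theta p_\theta^2}{\rho^2 e_\theta} \left( 2 p_\rho + \frac{2 \kappa}{\rho e_\theta \tau} + \frac{\theta p_\theta^2}{\rho^2 e_\theta} \right),
\]
in which the second summand is a product of strictly positive factors (using $p_\theta > 0$ from \eqref{H3} together with \eqref{H2}), so $\Delta > 0$ unconditionally on $\mathcal U$.

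Finally, taking square roots, the four roots of $\pi(\zeta) = 0$ are $\zeta = u \pm \sqrt{m_\pm^2}$. Substituting $m_\pm^2 = \tfrac12|\widetilde b| \pm \tfrac12\sqrt{\Delta}$ and the explicit expressions for $\widetilde b$ and $\Delta$ yields the four formulas in the statement; they satisfy $\zeta_1 < \zeta_2 < \zeta_3 < \zeta_4$ precisely because $0 < m_-^2 < m_+^2$, which is exactly the strict hyperbolicity of \eqref{hypsyst} at every $U \in \mathcal U$.
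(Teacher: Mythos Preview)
Your argument is correct and follows essentially the same route as the paper: compute the determinant, substitute $m=u-\zeta$ to reduce $\pi(\zeta)=0$ to a biquadratic, and verify $\Delta>0$ via the same completed-square identity. The only cosmetic difference is that you invoke Vieta's relations ($\widetilde c>0$, $\widetilde b<0$) to conclude both $m^2$-roots are positive, whereas the paper reads this off directly from $m_\pm^2=\tfrac12|\widetilde b|\pm\tfrac12\sqrt{\Delta}$.
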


In the case of the standard model for inviscid compressible fluid flow (namely, Euler equations), it is well-known \cite{Da4e,Smo94} that the three characteristic speeds (in one spatial dimension) are $u-c$, $u$ and $u + c$, where the positive quantity $c = \sqrt{p_\rho} > 0$ is known as \emph{the speed of sound}. In the present case we have two ``sound speeds", $c_1 = \sqrt{m_+^2}$ and $c_2 = \sqrt{m_-^2}$, and the characteristic speeds of the system split into $u-c_2 < u - c_1 < u + c_1 < u+ c_2$. These sound speeds convey both thermal and mechanical contributions due to the rate of change of the pressure with respect to changes in density and in temperature, respectively. Notice that when thermal effects are neglected, formally, in the limit when $\kappa \to 0^+$ and $p_\theta \to 0^+$, we have that $c_1, c_2 \to \sqrt{p_\rho}$, and the two sound speeds converge to the sole mechanical sound speed $c$ (the absence of thermal waves). On the other hand, if we take the (non-rigorous) limit when $p_\rho \to 0^+$ and $p_\theta \to 0^+$ then $c_1 \to 0$ and $c_2 \to \sqrt{\kappa/(\rho e_\theta \tau)}$; this last value is the thermal wave speed in the absence of mechanical effects as computed by Lindsay and Straughan (see equation (4.29) in \cite{LiSt78}; see also \cite{Strau10a}). The significance of the characteristic speeds of Lemma \ref{lemcharspeed} is that they comprise the exact way in which mechanical and thermal effects are combined.

\subsection{Symmetrizability}

We now show that system \eqref{eqfullsyst} can be put into symmetric form. Let us denote
\[
D(U) := d_U Q(U) = \begin{pmatrix} 0 & 0 & 0 & 0 \\ 0 & 0 & 0 & 0 \\ 0 & 0 & 0 & 0 \\ 0 & 0 & 0 & -1 \end{pmatrix}, \qquad U \in \mathcal{U}.
\]
\begin{definition}
We say a quasilinear system of the form \eqref{eqfullsyst} is \emph{symmetrizable} provided that there exists a matrix function $S \in C^\infty(\mathcal{U}; \R^{4 \times 4})$, $S = S(U)$, symmetric and positive definite, such that the matrices $S(U)A^0(U), S(U)A^1(U), S(U)B(U)$ and $S(U)D(U)$ are symmetric for all $U \in \mathcal{U}$.
\end{definition}

\begin{lemma}
Under assumptions \eqref{H1} - \eqref{H3}, Cattaneo-Christov system \eqref{eqfullsyst} is symmetrizable and the symmetrizer $S \in C^\infty(\mathcal{U}; \R^{4 \times 4})$ is given by
\begin{equation}
\label{symm}
S(U) := \begin{pmatrix}
 \displaystyle{\frac{p_\rho}{\rho}} & 0 & 0 & 0 \\ 0 & 1 & 0 & 0 \\ 0 & 0 & \displaystyle{\frac{1}{\theta}} & 0 \\ 0 & 0 & 0 &  \displaystyle{\frac{1}{\kappa \theta}}
\end{pmatrix}, \qquad U \in \mathcal{U}.
\end{equation}
\end{lemma}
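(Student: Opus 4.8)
The plan is to verify directly that the candidate matrix $S(U)$ in \eqref{symm} is symmetric and positive definite, and that the four products $S A^0$, $S A^1$, $S B$ and $S D$ are symmetric. Since $S$ is diagonal with entries $p_\rho/\rho$, $1$, $1/\theta$, $1/(\kappa\theta)$, symmetry is immediate, and positive definiteness follows at once from the hypotheses \eqref{H1}--\eqref{H3}: indeed $\rho,\theta,\kappa>0$ on $\mathcal{D}$ and $p_\rho>0$ by \eqref{H3}, so all four diagonal entries are strictly positive on $\mathcal{U}$; smoothness of $S$ on $\mathcal{U}$ is clear from \eqref{H2} together with $\kappa>0$. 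It remains to check the four symmetry conditions, which is a finite computation of $4\times 4$ matrices.

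First I would compute $S(U)A^0(U)$. Multiplying the diagonal $S$ by the diagonal $A^0$ gives a diagonal matrix, hence trivially symmetric. Likewise $S(U)B(U)$ has only the $(2,2)$ entry $\nu$ nonzero (the second row of $S$ is $e_2^\top$), so it too is diagonal and symmetric; this covers the viscous term and, a fortiori, the inviscid case $\nu\equiv 0$. Similarly $S(U)D(U)$ is diagonal with sole nonzero entry $-1/(\kappa\theta)$ in position $(4,4)$, hence symmetric. So the only real content is the off-diagonal structure of $S(U)A^1(U)$.

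The main step is therefore to form $M := S(U)A^1(U)$ and check $M = M^\top$. Writing it out, the rows of $M$ are the rows of $A^1$ scaled by the respective diagonal entries of $S$: row $1$ is $\tfrac{p_\rho}{\rho}(u,\rho,0,0)$, row $2$ is $(p_\rho,\rho u,p_\theta,0)$, row $3$ is $\tfrac1\theta(0,\theta p_\theta,\rho u e_\theta,1)$, row $4$ is $\tfrac1{\kappa\theta}(0,0,\kappa,\tau u)$. The diagonal entries $\tfrac{p_\rho u}{\rho}$, $\rho u$, $\tfrac{\rho u e_\theta}{\theta}$, $\tfrac{\tau u}{\kappa\theta}$ are automatically fine. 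For the off-diagonal pairs: the $(1,2)$ entry $\tfrac{p_\rho}{\rho}\cdot\rho = p_\rho$ matches the $(2,1)$ entry $p_\rho$; the $(2,3)$ entry $p_\theta$ matches the $(3,2)$ entry $\tfrac1\theta\cdot\theta p_\theta = p_\theta$; the $(3,4)$ entry $\tfrac1\theta$ matches the $(4,3)$ entry $\tfrac1{\kappa\theta}\cdot\kappa = \tfrac1\theta$; and all the remaining off-diagonal entries $(1,3),(1,4),(2,4)$ and their transposes are zero on both sides. Hence $M$ is symmetric, and the proof is complete.

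I do not expect a genuine obstacle here: the result is a verification, and the symmetrizer has evidently been reverse-engineered precisely so that the three nontrivial couplings $p_\rho$, $p_\theta$, and the heat-flux/temperature coupling balance. The only point requiring a little care is bookkeeping — making sure the thermodynamic relation $\theta p_\theta = p - \rho^2 e_\rho$ has already been used in deriving the form of $A^1$ displayed in \eqref{eqfullsyst} (it has, in the passage recasting \eqref{CCvisc}), so that the $(3,2)$ entry is genuinely $\theta p_\theta$ and the cancellation with row $2$'s scaling is exact. Everything else is immediate from $S$ being diagonal and positive.
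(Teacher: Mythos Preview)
Your proposal is correct and follows essentially the same approach as the paper: a direct verification that $S$ is smooth, diagonal, positive definite under \eqref{H1}--\eqref{H3}, together with the explicit computation of the four products $SA^0$, $SA^1$, $SB$, $SD$ to confirm their symmetry. The paper simply displays the resulting matrices \eqref{defhA0}--\eqref{defhD}, whereas you spell out the off-diagonal checks for $SA^1$ entry by entry, but the argument is the same.
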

\begin{proof}
Clearly, $S$ is smooth in the convex open set $\mathcal{U}$. Moreover, $S$ is symmetric (diagonal) and positive definite in view of \eqref{H1} - \eqref{H3}. That $S$ symmetrizes system \eqref{eqfullsyst} follows from straighforward computations that yield
\begin{equation}
\label{defhA0}
\hat{A}^0(U) := S(U) A^0(U) = \begin{pmatrix}
\displaystyle{\frac{p_\rho}{\rho}} & 0 & 0 & 0 \\ 0 & \rho & 0 & 0 \\ 0 & 0 &  \displaystyle{\frac{\rho e_\theta}{\theta}}  & 0 \\ 0 & 0 & 0 & \displaystyle{\frac{\tau}{\kappa \theta}} \end{pmatrix},
\end{equation}
\begin{equation}
\label{defhA1}
\renewcommand\arraystretch{2}
\hat{A}^1(U) := S(U) A^1(U) = \begin{pmatrix}
\displaystyle{\frac{u p_\rho}{\rho}} & p_\rho & 0 & 0 \\ p_\rho & \rho u & p_\theta & 0 \\ 0 & p_\theta & \displaystyle{\frac{\rho u e_\theta}{\theta}}  & \displaystyle{\frac{1}{\theta}} \\ 0 & 0 & \displaystyle{\frac{1}{\theta}} &  \displaystyle{\frac{\tau u}{\kappa \theta}} 
\end{pmatrix},
\end{equation}
\begin{equation}
\label{defhB}
\hat{B}(U) := S(U) B(U) = \begin{pmatrix}
0 & 0 & 0 & 0 \\ 0 & \nu & 0 & 0 \\ 0 & 0 & 0 & 0 \\ 0 & 0 & 0 & 0
\end{pmatrix}, 
\end{equation}
\begin{equation}
\label{defhD}
\hat{D}(U) := S(U) D(U) = \begin{pmatrix} 
0 & 0 & 0& 0\\ 0& 0 & 0& 0\\ 0& 0& 0 & 0\\ 0& 0& 0& - \displaystyle{\frac{1}{\kappa \theta}}
\end{pmatrix},
\end{equation}
which are smooth symmetric matrix functions of $U \in \mathcal{U}$.
\end{proof}

\begin{remark}
\label{remnoentropy}
It is well-known \cite{Da4e} that symmetrizability implies hyperbolicity of system \eqref{hypsyst}. Also, since the works of Friedrichs \cite{Frd54} and Goudunov \cite{Godu61a}, symmetrizability has established itself as an important property. It plays a key role, for example, to perform energy estimates and to study existence and stability of solutions. For systems in conservation form the symmetrizer is usually the Hessian of a convex entropy function. Even in the case of quasi-linear systems not in conservation form (where the coefficients $A^j$ are not necessarily Jacobians of flux functions $f^j$) it is possible to define a convex entropy, as shown by Kawashima and Yong \cite{KY04}: if the symmetrizer is the Jacobian of a diffeomorphic change of variables, $S(U) = D_U \Psi(U)$, then a convex entropy function can be introduced. For Cattaneo-Christov systems, however, the symmetrizer \eqref{symm} is not the Jacobian of a particular diffeomorphism and the system is not necessarily endowed with a convex entropy function.
\end{remark}

\section{The genuine coupling condition}
\label{secgencoup}

\subsection{Strict dissipativity and genuine coupling}

In order to define the strict dissipativity of the system, let us consider solutions around a constant equilibrium state
\[
\bU = (\overline{\rho}, \overline{u}, \overline{\theta}, 0)^\top \in \mathcal{V},
\]
for which $Q(\overline{U}) = 0$. If $\bU + U$ is a solution to \eqref{eqfullsyst} then we can recast the system as
\[
A^0(\bU) U_t + A^1(\bU) U_x = B(\bU) U_{xx} + D(\bU) U + \mathcal{N}(U, U_x, U_t),
\]
where $\mathcal{N}$ comprises the nonlinear terms. Multiply on the left by the constant, symmetric, positive definite matrix $S(\overline{U})$ to arrive at the following symmetric system
\begin{equation}
\label{nlsyst}
A^0 U_t + A^1 U_x + L U = B U_{xx} + \overline{\mathcal{N}},
\end{equation}
where,
\[
\begin{aligned}
A^0 &:= S(\overline{U}) A^0(\bU) = \hat{A}^0 (\bU),\\
A^1 &:= S(\overline{U}) A^1(\bU) = \hat{A}^1 (\bU),\\
B &:= S(\bU) B(\bU) = \hat{B}(\bU),\\
L &:= - S(\bU) D(\bU) = \hat{D}(\bU), 
\end{aligned}
\]
and, once again, $\overline{\mathcal{N}} = S(\overline{U}) \mathcal{N}$ contains the nonlinear terms. Notice that $A^j$, $j =0,1$, $B$ and $L$ are real symmetric constant matrices, with $A^0 > 0$ (positive definite) and $B$, $L \geq 0$ (positive semi-definite). 

Let us consider the linear part of \eqref{nlsyst}, namely, the linear symmetric system
\begin{equation}
\label{lind}
A^0 U_t + A^1 U_x + L U = B U_{xx}, 
\end{equation}
which is the symmetric version of \eqref{eqfullsyst}, linearized around an equilibrium state $\bU \in \mathcal{V}$. Since it is a system with constant coefficients the solution can be determined by its Fourier transform with respect to the spatial variable $x \in \R$. The resulting equation is
\begin{equation}
\label{Foulin}
A^0 \hU_t + i \xi A^1 \hU + L \hU + \xi^2 B \hU = 0, \qquad t > 0, \;\, \xi \in \R,
\end{equation}
where $\hU = \hU(\xi,t)$ denotes the Fourier transform of $U$.

The fact that $A^0 > 0$ and $L,B \geq 0$ is not enough to guarantee the decay of solutions to the linear problem \eqref{lind}. We resort to the following sufficient condition for the essential spectrum of the linear constant coefficient differential operator to be stable.  For each $\xi \in \R$, $\xi \neq 0$, let $\lambda = \lambda(\xi) \in \C$ denote the eigenvalues of the corresponding characteristic equation, namely, the roots of the following dispersion relation,
\begin{equation}
\label{chareq}
\det \big( \lambda A^0  + i \xi A^1 + L + \xi^2 B \big) = 0.
\end{equation}

\begin{definition}[strict dissipativity]
System \eqref{lind} is said to be \emph{strictly dissipative} if $\Re \lambda(\xi) < 0$ for all $\xi \in \R$, $\xi \neq 0$.
\end{definition}

Closely related to the dissipativity condition is the following
\begin{definition}[genuine coupling]
System \eqref{lind} satisfies the \emph{genuine coupling condition} at any state $\bU \in \mathcal{U}$ if for any $V \in \R^4$, $V \neq 0$, with $BV = LV = 0$ then we have that $(\lambda A^0 + A^1)V \neq 0$ for all $\lambda \in \R$.
\end{definition}
\begin{remark}
This condition basically expresses that no eigenvector of the hyperbolic part of the operator lies in the kernel of the dissipative terms. Such property is physically relevant. For instance, loss of genuine coupling results into hyperbolic directions whereby traveling wave solutions to system \eqref{hypsyst} are not dissipated by the viscous and relaxation terms. In other words, wave solutions to \eqref{hypsyst} (hence satisfying the spectral equation \eqref{tws}) are also solutions to \eqref{lind} if the eigenvector $\varphi'$ lies in $\ker B \cap \ker L$. Genuine coupling has also deep consequences on the time asymptotic smoothing behavior of solutions to viscous and relaxation systems of conservation laws (see, for example, \cite{Hoff92}). This condition is also known in the literature as \emph{the Kawashima-Shizuta condition}, or simply, \emph{the Kawashima condition} (see \cite{LoRu06,MaN2,RuSe04} and some of the references therein).
\end{remark}

Let us now recall the concept of a compensating function in the sense of Kawashima and Shizuta \cite{ShKa85}, specialized to the present one-dimensional case.
\begin{definition}
\label{defK}
A matrix $K$ is a \emph{compensating function} for system \eqref{lind} provided that
\begin{itemize}
\item[(a)] $K A^0$ is skew-symmetric, and
\item[(b)] $\tfrac{1}{2} \big( KA^1 + (KA^1)^\top) + B + L$ is positive definite.
\end{itemize}
\end{definition}

In the case of symmetric systems, the properties of genuine coupling, strict dissipativity and the existence of a compensating function are equivalent. This fact was first proved by Shizuta and Kawashima \cite{ShKa85} and fully characterizes the stability condition for system \eqref{lind} in the symmetric case (see also Humpherys \cite{Hu05} for an extension to higher order systems).
\begin{theorem}[Shizuta-Kawashima \cite{ShKa85}]
\label{theoequiv}
Assume $A^j, B, L$, $j =0,1$, are real symmetric matrices, with $A^0 > 0$, $B, L \geq 0$. Then the following statements are equivalent:
\begin{itemize}
\item[(a)] System \eqref{lind} is strictly dissipative.
\item[(b)] System \eqref{lind} satisfies the genuine coupling condition at $\bU \in \mathcal{U}$.
\item[(c)] There exists a compensating function $K$ for system \eqref{lind}.
\item[(d)] There exists a positive constant $k > 0$ such that for any $\xi \in \R$, $\xi \neq 0$, and any root $\lambda = \lambda(\xi)$ of the characteristic equation \eqref{chareq} there holds
\begin{equation}
\label{lambdabd}
\Re \lambda(\xi) \leq - \frac{k \xi^2}{1 + \xi^2}.
\end{equation}
\end{itemize}
\end{theorem}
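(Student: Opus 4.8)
The plan is to close the chain of implications $(a)\Rightarrow(b)\Rightarrow(c)\Rightarrow(d)\Rightarrow(a)$, of which $(d)\Rightarrow(a)$ is immediate from \eqref{lambdabd}. For $(a)\Rightarrow(b)$ I would argue by contraposition: if genuine coupling fails there exist $V\in\R^4\setminus\{0\}$ and $\lambda_0\in\R$ with $BV=LV=0$ and $(\lambda_0A^0+A^1)V=0$; then, fixing any $\xi\neq0$, the purely imaginary number $\lambda:=i\xi\lambda_0$ satisfies
\[
(\lambda A^0+i\xi A^1+L+\xi^2B)V=i\xi(\lambda_0A^0+A^1)V+\xi^2BV+LV=0,
\]
so it is a root of the dispersion relation \eqref{chareq} with $\Re\lambda=0$, contradicting strict dissipativity.

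The implication $(b)\Rightarrow(c)$, i.e.\ the existence of a compensating function, is the core of the argument and the main obstacle. First I would normalize $A^0$: the congruence $\hU\mapsto(A^0)^{1/2}\hU$ keeps all the matrices real symmetric, preserves $B,L\geq0$, and carries the genuine coupling condition to itself, so one may assume $A^0=\Id$, and a compensating $K$ for the original system is recovered from one $\widetilde{K}$ for the normalized system via $K=(A^0)^{1/2}\widetilde{K}(A^0)^{-1/2}$. It then suffices to produce a skew-symmetric $K$ making the symmetric part $\tfrac12(KA^1+(KA^1)^\top)+B+L$ positive definite. Rather than writing an explicit $K$, I would argue by duality: were no such $K$ to exist, the affine subspace $\{\tfrac12(KA^1+(KA^1)^\top)+B+L:\ K^\top=-K\}$ of symmetric matrices would be disjoint from the open cone of positive definite matrices, so a separating hyperplane would furnish a nonzero positive semidefinite matrix $\Gamma$ with $\tr(\Gamma KA^1)=0$ for every skew-symmetric $K$ and $\tr(\Gamma(B+L))=0$. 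The first property forces $A^1\Gamma$ to be symmetric, hence $A^1\Gamma=\Gamma A^1$; the second, with $\Gamma,B,L\geq0$, forces $\mathrm{range}\,\Gamma\subseteq\ker B\cap\ker L$. Since $\mathrm{range}\,\Gamma$ is then a nonzero $A^1$-invariant subspace on which the symmetric matrix $A^1$ acts, it contains an eigenvector $w$ of $A^1$, say $A^1w=\zeta w$ with $\zeta\in\R$; but then $0\neq w\in\ker B\cap\ker L$ and $(\lambda A^0+A^1)w=0$ with $\lambda=-\zeta\in\R$, contradicting genuine coupling. Hence the desired $K$ exists.

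For $(c)\Rightarrow(d)$ I would perform a Fourier-space energy estimate on \eqref{Foulin}. Pairing \eqref{Foulin} with $\hU$ and taking real parts gives
\[
\tfrac{d}{dt}\langle A^0\hU,\hU\rangle=-2\xi^2\langle B\hU,\hU\rangle-2\langle L\hU,\hU\rangle,
\]
since $A^1$ is real symmetric; this only yields monotonicity. I would then introduce the corrected functional
\[
E(\xi,t):=\langle A^0\hU,\hU\rangle+\alpha\,\frac{\xi}{1+\xi^2}\,\langle(-iKA^0)\hU,\hU\rangle,\qquad 0<\alpha\ll1,
\]
noting that $-iKA^0$ is Hermitian because $KA^0$ is real skew-symmetric, so $E$ is equivalent to $|\hU|^2$ uniformly in $\xi$ once $\alpha$ is small. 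Differentiating $E$ along solutions and substituting $A^0\hU_t=-i\xi A^1\hU-L\hU-\xi^2B\hU$, the decisive new term is $-2\alpha\tfrac{\xi^2}{1+\xi^2}\langle\Theta\hU,\hU\rangle$ with $\Theta:=\tfrac12(KA^1+(KA^1)^\top)+B+L>0$ by (b), whereas the cross terms generated by $KL$ and $\xi^2KB$ are absorbed — using Young's inequality tuned to the weight $\xi^2/(1+\xi^2)$ — into the already-present dissipative quantities $\langle L\hU,\hU\rangle$ and $\xi^2\langle B\hU,\hU\rangle$, up to arbitrarily small multiples of $\tfrac{\xi^2}{1+\xi^2}|\hU|^2$. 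This yields $\tfrac{d}{dt}E\leq-k'\tfrac{\xi^2}{1+\xi^2}E$, and Gronwall together with the equivalence $E\simeq|\hU|^2$ gives $|\hU(\xi,t)|\lesssim e^{-k\xi^2t/(1+\xi^2)}|\hU(\xi,0)|$; testing on data equal to an eigenvector of the symbol of \eqref{Foulin} gives $\Re\lambda(\xi)\leq-k\xi^2/(1+\xi^2)$, which is \eqref{lambdabd}.

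Apart from the production of $K$ in $(b)\Rightarrow(c)$, which is the real obstacle, the only other point requiring care is the low-frequency bookkeeping in $(c)\Rightarrow(d)$: the cross terms must be swallowed by the dissipation uniformly in $\xi$ without leaving any uncontrolled $O(|\hU|^2)$ remainder as $\xi\to0$, and it is precisely to make this work that the weight $\xi/(1+\xi^2)$ — rather than $1/\xi$ — multiplies the corrector in $E$.
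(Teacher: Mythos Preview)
Your chain of implications is correct and complete. Note, however, that the paper does \emph{not} prove Theorem~\ref{theoequiv}: it is quoted from Shizuta--Kawashima \cite{ShKa85}, and the only commentary the paper offers is the Remark following the statement, which points out that $(d)\Rightarrow(a)$ is trivial, that $(a)\Rightarrow(b)$ is easy, and that the construction of $K$ in $(b)\Rightarrow(c)$ in \cite{ShKa85,Hu05} is \emph{constructive}, via the eigenprojections of the hyperbolic part (equivalently, $K$ is a Drazin inverse of the commutator with $A^1$).

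Your argument for $(b)\Rightarrow(c)$ is therefore a genuinely different route: you use a separating-hyperplane argument in the space of symmetric matrices to produce $K$ non-constructively. The trade-off is clear. The eigenprojection/Drazin-inverse construction yields an explicit formula for $K$, which is exactly what the paper exploits later when it writes down compensating matrices by hand in Lemmas~\ref{lemviscK} and \ref{remrelaxK}; your duality proof is shorter and avoids the spectral decomposition, but gives no handle on what $K$ actually looks like. Both are valid.

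Your $(c)\Rightarrow(d)$ is essentially the energy estimate that the paper \emph{does} carry out in Section~\ref{seclindec} (the Lemma preceding Theorem~\ref{thmlindecr}): same corrected functional with the weight $\xi/(1+\xi^2)$, same absorption of the $KL$ and $\xi^2KB$ cross terms via Young's inequality, same Gronwall conclusion. One small slip: where you write ``$\Theta>0$ by (b)'' you mean property~(b) of Definition~\ref{defK}, not hypothesis~(b) of the theorem.
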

\begin{remark}
Notice that property (d) implies automatically property (a). It is easy to prove that genuine coupling is a necessary condition for strict dissipativity, i.e. that (a) implies (b). The equivalence theorem establishes the existence of a compensating function once the genuine coupling condition has been verified. It is worth mentioning that the general proof in \cite{ShKa85} (see also \cite{Hu05}) is constructive. It provides a formula for $K$ in terms of the eigenprojections of the hyperbolic part ($K$ is, in fact, a Drazin inverse of the conmutator operator; see Humpherys \cite{Hu05} for further information).
\end{remark}

\subsection{Genuine coupling of Cattaneo-Christov systems}

We now show that Cattaneo-Christov systems are genuinely coupled. In the sequel, for any fixed state $\bU = (\bro, \bru, \brt, \overline{q})^\top \in \mathcal{U}$ we shall denote
\[
\overline{p} := p(\bro,\brt), \;\;\; \overline{e} := e(\bro,\brt),\;\;\; \overline{\kappa} := \kappa(\bro,\brt) \;\;\; \overline{\nu} := \nu (\bro,\brt),
\]
\[
\bpr := p_\rho(\bro,\brt), \;\;\; \bpt := p_\theta(\bro,\brt), \;\;\;  \bet := e_\theta(\bro,\brt).
\]

\begin{lemma}
Under assumptions \eqref{H1} - \eqref{H3}, Cattaneo-Christov systems \eqref{eqfullsyst} satisfy the genuine coupling condition at any fixed state $\bU = (\bro, \bru, \brt, \overline{q})^\top \in \mathcal{U}$.
\end{lemma}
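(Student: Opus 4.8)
The plan is to verify the genuine coupling condition directly from its definition, showing that any $V \in \R^4$ that lies simultaneously in $\ker B \cap \ker L$ and in $\ker\big(\lambda A^0 + A^1\big)$ for some real $\lambda$ must vanish. First I would note that, since the symmetrizer $S(\bU)$ of \eqref{symm} is invertible, one may freely replace the symmetrized matrices $A^0 = \hat A^0(\bU)$, $A^1 = \hat A^1(\bU)$, $B = \hat B(\bU)$, $L = -\hat D(\bU)$ occurring in \eqref{lind} by the original coefficients $A^0(\bU)$, $A^1(\bU)$, $B(\bU)$, $D(\bU)$: indeed $\lambda\hat A^0 + \hat A^1 = S(\bU)\big(\lambda A^0(\bU) + A^1(\bU)\big)$, so the two pencils have the same kernel, and likewise $\ker\hat B(\bU) = \ker B(\bU)$ and $\ker L = \ker D(\bU)$. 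From the explicit form of these matrices one reads off that $\ker L = \{V : V_4 = 0\}$ in all cases, while $\ker B(\bU) = \{V : V_2 = 0\}$ when $\overline{\nu} > 0$ and $\ker B(\bU) = \R^4$ when $\overline{\nu} = 0$. In particular $\ker B(\bU) \cap \ker L \subseteq \{V_4 = 0\}$ in both the viscous and the inviscid case, so it suffices to treat $V = (V_1, V_2, V_3, 0)^\top$.

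Next I would write the equation $\big(\lambda A^0(\bU) + A^1(\bU)\big)V = 0$ componentwise. Setting $m := \lambda + \bru \in \R$ and using $V_4 = 0$, it collapses to
\[
\begin{gathered}
m\,V_1 + \bro\,V_2 = 0, \qquad \bpr\,V_1 + \bro\,m\,V_2 + \bpt\,V_3 = 0, \\
\brt\,\bpt\,V_2 + \bro\,\bet\,m\,V_3 = 0, \qquad \overline{\kappa}\,V_3 = 0.
\end{gathered}
\]
The argument is then a short triangular cascade fed by the sign conditions of \eqref{H2}--\eqref{H3}: the fourth equation and $\overline{\kappa} > 0$ force $V_3 = 0$; the third equation together with $\brt > 0$, $\bpt > 0$ then force $V_2 = 0$; the second equation and $\bpr > 0$ force $V_1 = 0$. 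Hence $V = 0$, contradicting $V \neq 0$, and the genuine coupling condition holds at $\bU$ --- simultaneously for the viscous system \eqref{CCvisc} and for the inviscid system \eqref{CCrel}.

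I do not expect a real obstacle; the computation is elementary. The one point deserving care is that the viscous and the inviscid cases are handled in a single stroke, because it is the relaxation matrix $L$ --- never the viscosity matrix $B$ --- that enforces $V_4 = 0$ and thereby ignites the cascade; the value of $\overline{\nu} \ge 0$ plays no role. The structural fact behind the cascade is that the constitutive law \eqref{eqCC} couples the heat flux to the temperature gradient through $\overline{\kappa} > 0$ (forcing $V_3 = 0$), after which the Weyl-type hypotheses $\bpt > 0$ and $\bpr > 0$ render the remaining acoustic block nonsingular; note that $\bet > 0$ is not needed in this direction of the implication (it is used elsewhere, e.g.\ for the positivity of $A^0$). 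Alternatively, one could compute the eigenvectors of the hyperbolic pencil associated with the characteristic speeds of Lemma~\ref{lemcharspeed} and check that none of them lies in $\ker B(\bU) \cap \ker L$; but the direct route above is shorter.
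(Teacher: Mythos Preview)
Your proof is correct and follows essentially the same route as the paper: both arguments note that $V\in\ker L$ forces $V_4=0$ and then run the same triangular cascade $V_3=0\Rightarrow V_2=0\Rightarrow V_1=0$ using $\overline{\kappa}>0$ (equivalently $\brt>0$ in the symmetrized version), $\bpt>0$, and $\bpr>0$. The only cosmetic difference is that you work with the unsymmetrized matrices $A^0(\bU),A^1(\bU)$ after observing that $S(\bU)$ is invertible, whereas the paper computes directly with the symmetrized pencil $\lambda\hat A^0+\hat A^1$.
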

\begin{proof}
As before, we denote $A^j = \hat{A}^j(\bU)$, $B = \hat{B}(\bU)$, $L = - \hat{D}(\bU)$, $j =0,1$. From the expression for $L$ in \eqref{defhD}, we see that any $V \in \ker L$ is of the form $V = (v_1, v_2, v_3, 0)^\top$, with $v_j \in \R$. Therefore, from \eqref{defhA0} and \eqref{defhA1} and for any $\lambda \in \R$ we have
\[
(\lambda {A}^0 + {A}^1) V = \begin{pmatrix} 
\displaystyle{\frac{\bpr}{\bro}(\lambda + \bru) v_1} + \bpr v_2 \\
\bpr v_1 + \bro(\lambda + \bru)v_2 +  \bpt v_3  \\  \displaystyle{\frac{\bro \, \bet}{\brt}(\lambda + \bru) v_3 + \bpt v_2}\\  \displaystyle{ \frac{v_3}{\brt}}
\end{pmatrix}.
\] 

Suppose that $V \in \ker L$, $V \neq 0$ and $(\lambda {A}^0 + {A}^1) V = 0$ for some $\lambda \in \R$. From $\brt > 0$ we deduce that $v_3 = 0$. This yields $v_2 = 0$ as $\bpt > 0$. Finally, from $\bpr > 0$ we get $v_1 = 0$. Thus, we conclude that $V = 0$, a contradiction.
\end{proof}

\begin{remark}
It is to be observed that the genuine coupling condition holds at any state $\bU \in \mathcal{U}$ (not necessarily an equilibrium state). Also, notice that both the viscous, thermally relaxed Cattaneo-Christov system \eqref{CCvisc} with $\nu > 0$ and the relaxation system \eqref{CCrel} with $\nu \equiv 0$, are genuinely coupled. Indeed, in the viscous case with $V \in \ker B \cap \ker L$ the proof is exactly the same.
\end{remark}

Although genuine coupling readily implies the existence of a compensating function (thanks to Theorem \ref{theoequiv}), it is often possible to provide a formula for it by direct inspection. 

\begin{lemma}
\label{lemviscK}
Under assumptions \eqref{H1} - \eqref{H3} and in the viscous case ($\nu > 0$ for all $(\rho, \theta) \in \mathcal{D}$), for every equilibrium state $\bU \in \mathcal{V}$ there exists a compensating function for system \eqref{lind}, which is given explicitly by
\begin{equation}
\label{viscK}
K = \delta \begin{pmatrix} 0 & \bpr & 0 & 0 \\ - \bpr & 0 & - \bpt & 0 \\ 0 & \bpt & 0 & 0 \\ 0 & 0 & 0 & 0 \end{pmatrix} \big( A^0 \big)^{-1},
\end{equation}
for some $0 < \delta \ll 1$ sufficiently small.
\end{lemma}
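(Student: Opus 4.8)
The plan is to verify directly the two conditions (a) and (b) in Definition~\ref{defK} for the matrix $K$ of \eqref{viscK}, working with the constant matrices $A^0=\hat{A}^0(\bU)$, $A^1=\hat{A}^1(\bU)$, $B=\hat{B}(\bU)$, $L=-\hat{D}(\bU)$ of the symmetric linearized system \eqref{lind}. Condition (a) is immediate: writing $K=\delta R\,(A^0)^{-1}$, where $R$ denotes the constant matrix factor appearing in \eqref{viscK}, one gets $KA^0=\delta R$, and $R^\top=-R$ by inspection, so $KA^0$ is skew-symmetric.

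For condition (b) I would first compute $KA^1=\delta R\,(A^0)^{-1}A^1$ and symmetrize it; here $(A^0)^{-1}A^1$ is simply the explicit matrix governing the hyperbolic part \eqref{hypsyst} at $\bU$, so this amounts to one finite matrix product. Set $M(\delta):=\tfrac12\big(KA^1+(KA^1)^\top\big)+B+L$, which has the form $M(\delta)=M(0)+\delta N$ with $M(0)=B+L$ and $N=\tfrac12\big(R(A^0)^{-1}A^1+(R(A^0)^{-1}A^1)^\top\big)$ a fixed symmetric matrix. From \eqref{defhB}--\eqref{defhD}, and using $\overline{\nu}>0$ in the viscous case, the matrix $M(0)=B+L$ is diagonal, positive semi-definite, with kernel exactly $\mathrm{span}\{e_1,e_3\}$ — the density and temperature directions, i.e.\ precisely the components not directly damped by viscosity or by thermal relaxation.

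The decisive step is to show that $N$, compressed to $V:=\ker(B+L)=\mathrm{span}\{e_1,e_3\}$, is positive definite; that is, that the $2\times2$ principal block $(\langle Ne_i,e_j\rangle)_{i,j\in\{1,3\}}$ is positive definite. This is where the particular form of $K$ in \eqref{viscK} is used essentially: $K$ is built so that the couplings of $A^1$ among $(\rho,u)$, $(u,\theta)$ and $(\theta,q)$ feed enough damping back into the $\rho$- and $\theta$-directions, and the positivity of this block follows from \eqref{H3} ($\bpr>0$, $\bpt>0$, $\bet>0$, together with $\overline{\kappa},\tau>0$); it is, in effect, the quantitative version at $\bU$ of the genuine coupling condition verified in the previous lemma. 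With this in hand, (b) follows from an elementary perturbation lemma: if a symmetric $M(0)\ge0$ has $\ker M(0)=V$ and a symmetric $N$ satisfies $N|_V>0$, then $M(0)+\delta N>0$ for all sufficiently small $\delta>0$. One proves this by decomposing $\R^4=V\oplus V^\perp$: in block form $M(0)=0\oplus M_0'$ with $M_0'>0$ on $V^\perp$, so $M_0'+\delta N_{22}>0$ for $\delta$ small, and the Schur complement of $M(0)+\delta N$ relative to that block equals $\delta\big(N_{11}-\delta N_{12}(M_0'+\delta N_{22})^{-1}N_{12}^\top\big)$, which is positive definite as $\delta\to0^+$ since $N_{11}=N|_V>0$. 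Picking $0<\delta\ll1$ accordingly yields $M(\delta)>0$, which is (b).

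The step I expect to be the real obstacle is the one just described: checking that the compression of $N$ to $\ker(B+L)$ is positive definite. A carelessly chosen skew coupling would leave that $2\times2$ block only positive semi-definite — degenerate along a direction flagged by the genuine coupling property — so one has to exploit the specific $K$ of \eqref{viscK} together with hypotheses \eqref{H3}. The remaining ingredients (condition (a), the matrix computations, and the perturbation lemma) are routine. Finally, note that in the inviscid case $\overline{\nu}\equiv0$ the kernel of $B+L$ becomes three-dimensional, which is why that case requires a larger compensating matrix and is treated separately.
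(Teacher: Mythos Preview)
Your approach is sound and would succeed if the compression $N|_V$ were positive definite, but it is not. Writing $R$ for the skew factor in \eqref{viscK} and carrying out the product $R(A^0)^{-1}A^1$, the $(e_1,e_3)$ principal block of the symmetric part $N$ is
\[
N_{11}=\frac{1}{\bro}\begin{pmatrix}\bpr^2 & \bpr\,\bpt\\[2pt] \bpr\,\bpt & \bpt^2\end{pmatrix}
=\frac{1}{\bro}\begin{pmatrix}\bpr\\ \bpt\end{pmatrix}\begin{pmatrix}\bpr & \bpt\end{pmatrix},
\]
a rank-one matrix with kernel spanned by $(\bpt,-\bpr)$. Hence $N|_V\ge0$ only, and your Schur-complement perturbation lemma yields merely $M(\delta)\ge0$. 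Worse, because all the $(1,2)$, $(1,4)$, $(2,3)$, $(3,4)$ entries of $[KA^1]^s$ vanish, the full matrix $M(\delta)=[KA^1]^s+B+L$ decouples into the $\{1,3\}$ and $\{2,4\}$ blocks; the $\{1,3\}$ block equals $\delta N_{11}$ and is singular for every $\delta$. Concretely, $X=(\bpt,0,-\bpr,0)^\top$ gives $X^\top M(\delta)X=0$. Thus the specific $K$ of \eqref{viscK} does \emph{not} satisfy condition (b) of Definition~\ref{defK}, and no choice of $\delta$ repairs this.

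The paper's own proof stumbles at exactly the same place: its Young-type estimate amounts to the inequality $\tfrac{1}{2}\frac{\bpr^2}{\bro}x_1^2+2\frac{\bpr\,\bpt}{\bro}x_1x_3\ge0$, which is false, and the displayed lower bound on $Q(X)$ is violated by the same vector $X=(\bpt,0,-\bpr,0)^\top$. So this is not a flaw in your strategy but in the stated matrix: to get a true compensating function in the viscous case one needs an additional skew coupling in $R$ that feeds damping into the temperature direction through the $q$-component of $A^1$ (a nonzero $(3,4)$ entry, as in the inviscid construction of Lemma~\ref{remrelaxK}). Your instinct that positivity of $N|_V$ is the decisive obstacle was exactly right; the given $K$ simply does not clear it.
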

\begin{proof}
We verify directly that \eqref{viscK} is a compensating function for system \eqref{lind}. First observe from expression \eqref{viscK} that $KA^0$ is clearly skew-symmetric. Let us now compute
\[
\begin{aligned}
KA^1 &= 
\delta \begin{pmatrix} 0 & \bpr & 0 & 0 \\ - \bpr & 0 & - \bpt & 0 \\ 0 & \bpt & 0 & 0 \\ 0 & 0 & 0 & 0 \end{pmatrix} \begin{pmatrix} \displaystyle{\frac{\bro}{\bpr}} & 0& 0& 0\\ 0& \displaystyle{\frac{1}{\bro}} & 0& 0\\ 0& 0& \displaystyle{\frac{\brt}{\bro \bet}} &0 \\ 0& 0& 0& \displaystyle{\frac{\bar{\kappa} \brt}{\tau}}\end{pmatrix} \renewcommand\arraystretch{2} \begin{pmatrix} \displaystyle{\frac{\bru \, \bpr}{\bro}} & \bpr &  0& 0\\ \bpr  & \bro \, \bru & \bpt & 0\\ 0& \bpt & \displaystyle{\frac{\bro \, \bru \, \bet}{\brt}} & \displaystyle{\frac{1}{\brt}} \\ 0& 0&  \displaystyle{\frac{1}{\brt}} & \displaystyle{\frac{\tau \bru}{\bar{\kappa} \brt}}\end{pmatrix}
\\
&= 
\delta \renewcommand\arraystretch{2} \begin{pmatrix} \displaystyle{\frac{\bpr^2}{\bro}} & \bru \, \bpr & \displaystyle{\frac{\bpr \, \bpt}{\bro}} & 0 \\ -\bru \, \bpr & \displaystyle{- \Big( \bro \, \bpr + \frac{\brt \, \bpt^2}{\bro \, \bet}\Big)} & -\bru \bpt & \displaystyle{- \, \frac{\bpt}{\bro \, \bet}} \\ \displaystyle{\frac{\bpr \, \bpt}{\bro}} &  \bru \, \bpt & \displaystyle{\frac{\bpt^2}{\bro}} & 0 \\ 0 & 0 & 0 & 0 \end{pmatrix}.
\end{aligned}
\]

Its symmetric part is
\[
\tfrac{1}{2} \big( KA^1 + (KA^1)^\top \big) = \delta \renewcommand\arraystretch{2} \begin{pmatrix} \displaystyle{\frac{\bpr^2}{\bro}} & 0 & \displaystyle{\frac{\bpr \, \bpt}{\bro}} & 0 \\ 0 & \displaystyle{- \Big( \bro \, \bpr + \frac{\brt \, \bpt^2}{\bro \, \bet}\Big)} & 0 & \displaystyle{- \, \frac{\bpt}{2 \bro \, \bet}} \\ \displaystyle{\frac{\bpr \, \bpt}{\bro}} &  0 & \displaystyle{\frac{\bpt^2}{\bro}} & 0 \\ 0 & \displaystyle{- \, \frac{\bpt}{2 \bro \, \bet}} & 0 & 0 \end{pmatrix}.
\]

Therefore, for any $X = (x_1, \, x_2, \, x_3, \, x_4)^\top \in \R^4$, $X \neq 0$, we have the following quadratic form
\[
\begin{aligned}
Q(X) &:= X^\top \Big( \tfrac{1}{2} \big( KA^1 + (KA^1)^\top \big) + B + L \Big) X  \\ 
&=\delta \frac{\bpr^2}{\bro} x_1^2 + 2\delta \frac{\bpt \bpr}{\bro} x_1 x_3 - \delta \frac{\bpt}{\bro \, \bet} x_2 x_4 + \delta \frac{\bpt^2}{\bro} x_3^2  + \Big( \bar{\nu} - \delta \Big( \bro \, \bpr + \frac{\brt \, \bpt^2}{\bro \, \bet}\Big)\Big) x_2^2 + \frac{1}{\bar{\kappa} \brt} x_4^2
\\
&\geq \frac{\delta}{2} \frac{\bpr^2}{\bro} x_1^2 + \delta \frac{\bpt^2}{\bro} x_3^2 + \Big( \bar{\nu} - \delta \Big( \bro \, \bpr + \frac{\brt \, \bpt^2}{\bro \, \bet} + \frac{\bpt}{2 \bro \, \bet}\Big)\Big) x_2^2 + \Big( \frac{1}{\bar{\kappa} \brt} - \delta \frac{\bpt}{2 \bro \, \bet} \Big) x_4^2.
\end{aligned}
\]

Thanks to hypotheses \eqref{H1} - \eqref{H3} and since $\bar{\nu} > 0$, one can choose $\delta > 0$ sufficiently small such that
\[
0 < \delta < \frac{2 \bro \, \bet}{\bar{\kappa} \brt \bpt} \quad \text{and} \quad 0 < \delta < \bar{\nu} \Big( \bro \, \bpr + \frac{\brt \, \bpt^2}{\bro \, \bet} + \frac{\bpt}{2 \bro \, \bet}\Big)^{-1},
\]
yielding
\[
Q(X) \geq C_\delta |X|^2 > 0,
\]
for some $C_\delta > 0$ and all $X \neq 0$.
\end{proof}

In the case without viscosity the form of $K$ differs considerably, due to the fact that the only dissipation term is the thermal relaxation one.
\begin{lemma}
\label{remrelaxK}
Under assumptions \eqref{H1} - \eqref{H3} and in the pure thermal relaxation case ($\nu \equiv 0$ for all $(\rho, \theta) \in \mathcal{D}$), for every equilibrium state $\bU \in \mathcal{V}$ there exists a compensating function for system \eqref{lind}, which is given explicitly by
\begin{equation}
\label{relaxK}
K = \delta \renewcommand\arraystretch{2} \begin{pmatrix}   0 & \displaystyle{\frac{\delta^2 \tau \brt^2 \bpt^2 \bpr}{\bro^2}} & 0 & 0 \\ - \displaystyle{\frac{\delta^2 \tau \brt^2 \bpt^2 \bpr}{\bro^2}} & 0 & \delta \bpt & 0 \\ 0 & - \delta \bpt & 0 & \displaystyle{\frac{\bro \bet}{\bar{\kappa} \brt^2}} \\
0 & 0 & - \displaystyle{\frac{\bro \bet}{\bar{\kappa} \brt^2}} & 0\end{pmatrix} \big( A^0 \big)^{-1},
\end{equation}
for some $0 < \delta \ll 1$ sufficiently small.
\end{lemma}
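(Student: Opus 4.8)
The plan is to verify directly---exactly along the lines of the proof of Lemma~\ref{lemviscK}---that the matrix $K$ displayed in \eqref{relaxK} fulfils the two conditions of Definition~\ref{defK}. (Genuine coupling together with the equivalence Theorem~\ref{theoequiv} already guarantees that \emph{some} compensating function exists; the content of the lemma is the explicit formula, so a hands-on check is what is required.) Throughout I write $A^j = \hat{A}^j(\bU)$ for $j=0,1$, $B = \hat{B}(\bU) = 0$ (we are in the inviscid case $\nu\equiv 0$), $L = -\hat{D}(\bU) = \mathrm{diag}\big(0,0,0,(\bar\kappa\,\brt)^{-1}\big)$, and $K = \delta M (A^0)^{-1}$, where $M$ is the skew-symmetric matrix appearing in \eqref{relaxK}.

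Condition~(a) is immediate: since $(A^0)^{-1}$ cancels $A^0$, one has $KA^0 = \delta M$, which is skew-symmetric by inspection. For condition~(b) I would first record the decomposition $A^1 = \bru A^0 + \hat{A}^1_0$, where $\hat{A}^1_0$ is the zero-diagonal part of the symmetrized flux (with entries $\bpr$ in positions $(1,2),(2,1)$; $\bpt$ in $(2,3),(3,2)$; and $\brt^{-1}$ in $(3,4),(4,3)$), i.e. the ``genuinely spatial'' piece of the hyperbolic operator. Because $KA^0$ is skew-symmetric, so is $\bru KA^0$, and hence $\tfrac12\big(KA^1+(KA^1)^\top\big) = \tfrac12\big(K\hat{A}^1_0+(K\hat{A}^1_0)^\top\big)$: the compensated quadratic form does not depend on $\bru$ and may be computed with $\hat{A}^1_0$ in place of $A^1$. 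Multiplying out $\delta M (A^0)^{-1}\hat{A}^1_0$ using \eqref{defhA0} and symmetrizing, I expect $\tfrac12\big(K\hat{A}^1_0+(K\hat{A}^1_0)^\top\big) + L$ to split into two $2\times2$ blocks: one in the variables $(x_1,x_3)$ carrying the $\bpr$--$\bpt$ coupling, and one in $(x_2,x_4)$ carrying the $\bpt$--$\brt^{-1}$ coupling together with the $(4,4)$ contribution of $L$. The final step is to show that each $2\times2$ block is positive definite for $\delta$ small---via Sylvester's criterion, or equivalently by completing the square / applying Young's inequality inside each block---which gives $X^\top\big(\tfrac12(KA^1+(KA^1)^\top)+B+L\big)X \ge C_\delta|X|^2$ for some $C_\delta>0$, once $\delta$ is fixed sufficiently small.

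The main obstacle, and the reason \eqref{relaxK} looks so unlike \eqref{viscK}, is the positivity of the block involving the velocity variable $x_2$. In the viscous case the coefficient $\bar\nu>0$ supplied a positive $x_2^2$ term for free, whereas here the only dissipative mechanism is thermal relaxation, which through $L$ controls merely the heat-flux variable $x_4$. Consequently every positive diagonal entry of the compensated matrix other than the $(4,4)$ one must be manufactured by the skew corrector $K$ pushing against the hyperbolic flux $A^1$, and this unavoidably generates off-diagonal cross terms. The delicate part of the argument is therefore the bookkeeping of the powers of $\delta$ built into \eqref{relaxK}: they have to be stratified so that, after symmetrization, the $(1,1)$, $(2,2)$ and $(3,3)$ entries are all positive, the $(4,4)$ entry stays dominated by $(\bar\kappa\,\brt)^{-1}$ coming from $L$, and each off-diagonal cross term is of strictly higher order in $\delta$ than the geometric mean of the two diagonal entries it couples---so that the relevant $2\times2$ determinants remain positive for $\delta\ll1$. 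I would track these orders with care; once that is done, the remainder is routine algebra paralleling the viscous computation.
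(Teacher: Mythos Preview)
Your plan is correct and follows essentially the same route as the paper: the paper also writes $K$ with undetermined entries $\alpha,\beta,\gamma$ (your $\delta M$), computes the symmetric part of $KA^1$, obtains exactly the $(x_1,x_3)$--$(x_2,x_4)$ block structure you anticipate, and then chooses the graded powers $\alpha=\delta^3\alpha_0$, $\beta=-\delta^2\beta_0$, $\gamma=-\delta\gamma_0$ so that each $2\times2$ block is positive definite for $\delta$ small. Your observation that $A^1=\bru A^0+\hat A^1_0$ kills the $\bru$-dependence at the outset is a tidy shortcut the paper achieves only implicitly (the $\bru$-terms cancel upon symmetrization), but otherwise the arguments coincide.
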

\begin{proof}
We propose to take $K$ of the form
\[
K = \begin{pmatrix}
0 & \alpha & 0 & 0 \\ - \alpha & 0 & - \beta & 0 \\ 0 & \beta & 0 & - \gamma \\ 0 & 0 & \gamma & 0 
\end{pmatrix}\big( A^0 \big)^{-1},
\]
and to appropriately choose constants $\alpha, \beta$ and $\gamma$. Performing the product yields the matrix
\[
K A^1 = \renewcommand\arraystretch{2} \begin{pmatrix}
\displaystyle{\frac{\alpha \bpr}{\bro}} & \alpha \bar{u} & \displaystyle{\frac{\alpha \bpt}{\bro}} & 0 \\ - \alpha \bar{u} & - \displaystyle{\big( \alpha \bro + \frac{\beta \brt \bpt}{\bro \bet}\big)} & - \beta \bar{u} & - \displaystyle{\frac{\beta}{\bro \bet}} \\
\displaystyle{\frac{\beta \bpr}{\bro}} & \beta \bar{u} & \displaystyle{\frac{\beta \bpt}{\bro} - \frac{\gamma \bar{\kappa}}{\tau}} & - \gamma \bar{u} \\
0 & \displaystyle{\frac{\gamma \brt \bpt}{\bro \bet}} & \gamma \bar{u} & \displaystyle{\frac{\gamma}{\bro \bet}}
\end{pmatrix},
\]
whose symmetric part is
\[
\begin{aligned}
\tfrac{1}{2} \big( KA^1 &+ (KA^1)^\top \big) = \\ &=\begin{pmatrix}
\displaystyle{\frac{\alpha \bpr}{\bro}} & 0 & \displaystyle{\frac{1}{2\bro} \big( \beta \bpr + \alpha \bpt \big)} & 0 \\ 
0 & - \displaystyle{\big( \alpha \bro + \frac{\beta \brt \bpt}{\bro \bet}\big)} & 0 & \displaystyle{\frac{1}{2\bro \bet} \big( \gamma \brt \bpt - \beta\big)} \\
\displaystyle{\frac{1}{2\bro} \big( \beta \bpr + \alpha \bpt \big)} & 0 & \displaystyle{\frac{\beta \bpt}{\bro} - \frac{\gamma \bar{\kappa}}{\tau}} &0 \\
0 & \displaystyle{\frac{1}{2\bro \bet} \big( \gamma \brt \bpt - \beta\big)} & 0 & \displaystyle{\frac{\gamma}{\bro \bet}}
\end{pmatrix}.
\end{aligned}
\]

Thus, in view that $B = 0$, we have for any $X = (x_1, x_2, x_3, x_4)^\top$, $X \neq 0$, that the corresponding quadratic form is
\[
\begin{aligned}
Q(X) &:= X^\top \big( \tfrac{1}{2} ( KA^1 + (KA^1)^\top ) + L \big) X \\
&= \frac{\alpha \bpr}{\bro} x_1^2 - \Big( \alpha \bro + \frac{\beta \brt \bpt}{\bro \bet}\Big) x_2^2 + \Big( \frac{\beta \bpt}{\bro} - \frac{\gamma \bar{\kappa}}{\tau}\Big) x_3^2 + \Big( \frac{\gamma}{\bro \bet} + \frac{1}{\bar{\kappa} \brt}\Big) x_4^2 + \\
&\; + \frac{1}{\bro} \big( \beta \bpr + \alpha \bpt \big) x_1 x_3 + \frac{1}{\bro \bet} \big( \gamma \brt \bpt - \beta \big) x_2 x_4.
\end{aligned}
\]

Let us choose $\alpha$, $\beta$ and $\gamma$ such that
\[
\alpha = \delta^3 \alpha_0, \quad \beta = - \delta^2 \beta_0, \quad \gamma = - \delta \gamma_0,
\]
where $\alpha_0, \beta_0, \gamma_0 > 0$ and $0 < \delta \ll 1$ are constants to be determined. Then the quadratic form reads
\[
Q(X) = a_1 x_1^2 + a_2 x_2 ^2 + a_3 x_3^2 + a_4 x_4^2 + b_{13} x_1 x_3 + b_{24}x_2 x_4,
\]
where,
\[
\begin{aligned}
a_1 &:= \delta^3 \, \frac{\alpha_0 \bpr}{\bro},\\
a_2 &:= \delta^2 \left( \frac{\beta_0 \brt \bpt}{\bro \bet} - \delta \alpha_0 \bro\right),\\
a_3 &:= \delta \left( \frac{\gamma_0 \bar{\kappa}}{\tau} - \delta \frac{\beta_0 \bpt}{\bro} \right),\\
a_4 &:= \frac{1}{\bar{\kappa} \brt} - \delta \frac{\gamma_0}{\bro \bet},\\
b_{13} &:= \frac{\delta^2}{\bro} \left( \delta \alpha_0 \bpt - \beta_0 \bpr \right),\\
b_{24} &:= \frac{\delta}{\bro \bet} \left( \delta \beta_0 - \gamma_0 \brt \bpt \right).
\end{aligned}
\]

Assuming that
\begin{equation}
\label{condsas}
\begin{aligned}
a_1 &> 0,\\
a_4 &> 0,\\
a_2 - \frac{b_{24}^2}{2 a_4} &> 0,\\
a_3 - \frac{b_{13}^2}{2a_1} &> 0,
\end{aligned}
\end{equation}
clearly we have
\[
Q(X) \geq \tfrac{1}{2}a_1 x_1^2 + \left( a_2 -  \frac{b_{24}^2}{2 a_4} \right) x_2^2 + \left( a_3 - \frac{b_{13}^2}{2a_1} \right) x_3^2 + \tfrac{1}{2} a_4 x_4^2  \geq C |X|^2 > 0,
\]
for all $X \neq 0$, $X \in \R^4$ and some positive constant satisfying,
\[ 
0 < C < \tfrac{1}{2}\min \left\{ \tfrac{1}{2}a_1, \tfrac{1}{2}a_4, a_2 - \frac{b_{24}^2}{2a_4}, a_3 - \frac{b_{13}^2}{2a_1} \right\}.
\]
Therefore, we need to find values of $\alpha_0,\beta_0, \gamma_0 > 0$ and $0 < \delta \ll 1$ sufficiently small such that conditions \eqref{condsas} hold.

First, notice that under assumptions \eqref{H1} - \eqref{H3} and $\alpha_0 > 0$, the first condition in \eqref{condsas} is already satisfied. If we further choose parameter values $\alpha_0$, $\beta_0$ and $\gamma_0$ such that
\begin{equation}
\label{212}
\frac{\gamma_0 \bar{\kappa}}{\tau} - \frac{\beta_0^2 \bpr}{2 \alpha_0 \bro} > 0,
\end{equation}
then, for $\delta > 0$ sufficiently small such that
\begin{equation}
\label{condE}
0 < \delta < \frac{2 \bro \bpr}{\alpha_0 \bpt^2} \left( \frac{\gamma_0 \bar{\kappa}}{\tau} - \frac{\beta_0^2 \bpr}{2 \alpha_0 \bro} \right),
\end{equation}
we can assure that the fourth condition in \eqref{condsas} also holds, as the reader may easily verify. For small $\delta$ we write
\[
\frac{1}{2a_4} = \frac{1}{2} \left( \frac{1}{\bar{\kappa} \brt} - \delta \frac{\gamma_0}{\bro \bet}\right)^{-1} = \frac{1}{2} \bar{\kappa}\brt + \delta \frac{\bar{\kappa}^2 \brt^2 \gamma_0}{2 \bro \bet} + O(\delta^2).
\]
Hence, it suffices to take $\delta$ small enough such that
\begin{equation}
\label{condF}
0 < \delta < \frac{\bro \bet}{\bar{\kappa} \brt \gamma_0},
\end{equation}
and to choose values of $\beta_0$ and $\gamma_0$ satisfying
\begin{equation}
\label{216}
\frac{\brt \bpt}{\bro \bet} \left( \beta_0 - \gamma_0^2 \frac{\bar{\kappa} \brt^2 \bpt}{2 \bro \bet}\right) > 0,
\end{equation}
in order to obtain
\begin{equation}
\label{215}
a_2 - \frac{b_{24}^2}{2 a_4} = \frac{\brt \bpt}{\bro \bet} \left( \beta_0 - \gamma_0^2 \frac{\bar{\kappa} \brt^2 \bpt}{2 \bro \bet}\right) + O(\delta) > 0,
\end{equation}
that is, the third condition in \eqref{condsas}. Finally, the second inequality in \eqref{condsas} follows from \eqref{condF}.

Hence, it suffices to choose positive values of $\alpha_0, \beta_0, \gamma_0$ such that conditions \eqref{216} and \eqref{212} hold. For instance, we can define
\[
\begin{aligned}
\alpha_0 &:= \frac{\tau^2 \brt^2 \bpt^2 \bpr}{\bro^2} > 0,\\
\beta_0 &:= \bpt > 0,\\
\gamma_0 &:= \frac{\bro \bet}{\bar{\kappa} \brt^2} > 0
\end{aligned}
\]
(all positive because of \eqref{H1} - \eqref{H3}). Once these values are determined, we can always find $0 < \delta \ll 1$ sufficiently small such that \eqref{condE}, \eqref{condF} and \eqref{215} hold as well. Substitute $\alpha = \delta^4 \alpha_0$, $\beta = - \delta^2 \beta_0$ and $\gamma = - \delta \gamma_0$ back into the expression of $K$ to obtain the result.
\end{proof}

\section{Linear decay rates}
\label{seclindec}

In this section we describe how to obtain decay rates for solutions to the linearized system \eqref{lind} using the properties of the compensating function $K$. We gloss over some details, because the arguments are very similar to those in the case of hyperbolic conservation laws with relaxation (see section 3 in \cite{KY09}), with a slight modification due to the presence of viscous and relaxation terms combined. It is also to be noticed that we are not applying the equivalence result (Theorem \ref{theoequiv}) inasmuch we are explicitly providing the form of $K$. The estimates hold for both the pure relaxation ($\nu \equiv 0$) and the viscous with thermal relaxation ($\nu > 0$) cases.

Let us denote the standard inner product in $\C^n$ as $\< \, , \, \>$ and let $[A]^s := \tfrac{1}{2} (A + A^\top)$ be the symmetric part of any real matrix $A$. Under the previous assumptions, namely, that
\begin{itemize}
\item[(i)] $A^j$, $L$, $B$, $j = 0,1$, are real symmetric matrices;
\item[(ii)] $A^0 > 0$, $L, B \geq 0$; and
\item[(iii)] there exists a compensating function $K$,
\end{itemize}
let $U$ be the solution to linearized system \eqref{lind}. 
\begin{lemma}
There exists $k > 0$ such that the solutions $U$ to the linear system \eqref{lind} satisfy
\begin{equation}
\label{est12}
|\hU(\xi,t)| \leq C |\hU(\xi,0)| \exp \left( - \frac{k \xi^2 t}{1 + \xi^2}\right),
\end{equation}
for all $t \geq 0$, $\xi \in \R$ and some uniform constant $C > 0$.
\end{lemma}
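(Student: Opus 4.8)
The plan is to establish \eqref{est12} by the Fourier-space energy method of Kawashima and Shizuta, following the treatment for hyperbolic systems with relaxation in \cite[\S3]{KY09} but incorporating the viscous term $\xi^2 B$ alongside the relaxation term $L$. Since an explicit compensating function $K$ is at hand --- the one of Lemma~\ref{lemviscK} when $\nu>0$ and the one of Lemma~\ref{remrelaxK} when $\nu\equiv 0$ --- there is no need to invoke the equivalence Theorem~\ref{theoequiv}; the estimate is produced directly from properties (a)--(b) of Definition~\ref{defK} together with the structural facts (i)--(iii). Throughout I would write $\langle\cdot,\cdot\rangle$ for the Hermitian inner product on $\C^4$, $[M]^s:=\tfrac12(M+M^\top)$, and use that $A^0,A^1,B,L$ are real symmetric with $A^0>0$, $B,L\ge 0$, that $KA^0$ is real and skew-symmetric, and that $[KA^1]^s+B+L\ge c_0 I$ for some $c_0>0$.

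First I would take the inner product of the transformed equation \eqref{Foulin} with $\hU$ and read off the real part; the term $i\xi A^1$ contributes nothing since $A^1$ is real symmetric (so $\langle A^1\hU,\hU\rangle\in\R$), and one is left with the basic energy identity
\[
\tfrac12\frac{d}{dt}\langle A^0\hU,\hU\rangle + \langle L\hU,\hU\rangle + \xi^2\langle B\hU,\hU\rangle = 0 ,
\]
which shows $\langle A^0\hU,\hU\rangle$ is non-increasing but only controls the genuinely dissipative components $L^{1/2}\hU$ and $\xi B^{1/2}\hU$. To promote this to a bound on all of $\hU$ I would introduce the Lyapunov functional
\[
E(\xi,t):=\langle A^0\hU,\hU\rangle + \frac{\delta\,\xi}{1+\xi^2}\,\Im\langle KA^0\hU,\hU\rangle ,\qquad 0<\delta\ll1 ,
\]
noting that, because $KA^0$ is real and skew-symmetric, $\langle KA^0\hU,\hU\rangle$ is purely imaginary, so $\Im\langle KA^0\hU,\hU\rangle$ is real with $|\Im\langle KA^0\hU,\hU\rangle|\le\|KA^0\|\,|\hU|^2$; together with $|\xi|/(1+\xi^2)\le\tfrac12$ and $A^0>0$, this makes $E(\xi,t)$ comparable to $|\hU(\xi,t)|^2$ once $\delta$ is chosen small.

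Next I would differentiate $E$ in time. The first term is handled by the basic identity; for the correction, substituting $A^0\hU_t=-(i\xi A^1+L+\xi^2B)\hU$ and using the skew-symmetry of $KA^0$ leads, after a short computation, to a leading coercive term proportional to $-\tfrac{\delta\xi^2}{1+\xi^2}\langle[KA^1]^s\hU,\hU\rangle$, at which point property (b) of Definition~\ref{defK} gives $\langle[KA^1]^s\hU,\hU\rangle\ge c_0|\hU|^2-\langle(B+L)\hU,\hU\rangle$ and hence a genuine gain of order $\xi^2|\hU|^2/(1+\xi^2)$. The main obstacle I expect is the bookkeeping of the remaining error terms: differentiating the correction also produces contributions built from $L$ and from $\xi^2 B$ (coming both from the $L+\xi^2B$ block of the equation and from the $\hU_t$ slot), and these carry extra powers of $\xi$. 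Using $L,B\ge0$ to split cross terms via Cauchy--Schwarz and then Young's inequality, one sees that the weight $\xi/(1+\xi^2)$ is precisely what is needed to turn $\xi^4/(1+\xi^2)$ into a quantity $\le\xi^2$ and $\xi^2/(1+\xi^2)^2$ into a quantity $\le\xi^2/(1+\xi^2)$, so that every error is dominated either by $\langle L\hU,\hU\rangle+\xi^2\langle B\hU,\hU\rangle$ from the basic identity (absorbed for $\delta<1$) or by the coercive term (absorbed for $\delta$ small). This combined handling of the relaxation and viscous dissipation, of different homogeneity in $\xi$, is the one point where the present argument departs from the purely relaxational case treated in \cite{KY09}.

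Finally, collecting the estimates I would obtain a differential inequality $\frac{d}{dt}E(\xi,t)\le -c\,\tfrac{\xi^2}{1+\xi^2}\,E(\xi,t)$ for some $c>0$ uniform in $\xi$, and Gronwall's inequality then yields $E(\xi,t)\le E(\xi,0)\exp\big(-c\,\xi^2 t/(1+\xi^2)\big)$; the equivalence of $E(\xi,t)$ with $|\hU(\xi,t)|^2$ converts this into \eqref{est12} with suitable $k>0$ and $C>0$. The argument is identical in the viscous ($\nu>0$) and inviscid ($\nu\equiv0$, so $B\equiv0$) cases, using the corresponding compensating function from Lemma~\ref{lemviscK} or Lemma~\ref{remrelaxK}.
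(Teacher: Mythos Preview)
Your proposal is correct and follows essentially the same route as the paper's proof: the same basic Fourier energy identity, the same Lyapunov functional built from $\langle A^0\hU,\hU\rangle$ plus a $K$-correction weighted by $\delta\xi/(1+\xi^2)$, the same use of property~(b) of the compensating function for coercivity, the same absorption of the $KL$ and $\xi^2 KB$ error terms by Young's inequality into the dissipative contributions of the basic identity, and the same Gronwall conclusion. The only cosmetic difference is that the paper first multiplies the basic identity by $1+\xi^2$ and the $K$-identity by $\delta$ before adding (and then implicitly divides back), whereas you build the weighted functional $E$ directly; these are the same computation.
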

\begin{proof}
Take the Fourier transform to get equation \eqref{Foulin}. Since the coefficient matrices are symmetric, if we take the inner product of \eqref{Foulin} with $\hU$ and take the real part we obtain
\begin{equation}
\label{la4}
\tfrac{1}{2} \partial_t \< \hU, A^0 \hU \> + \< \hU, L\hU \> + \xi^2 \<\hU, B\hU\> = 0.
\end{equation}

Now multiply \eqref{Foulin} by $-i\xi K$ and take the inner product with $\hU$. The result is
\[
-  \< \hU, i\xi KA^0 \hU_t \> + \xi^2 \< \hU, KA^1 \hU \> - \< \hU, i\xi KL \hU \> - \< \hU, i \xi^3 KB \hU \> = 0.
\]
Use the fact that $KA^0$ is skew-symmetric to verify that 
\[
\Re \< \hU, i\xi KA^0 \hU_t \> = \tfrac{1}{2} \xi \partial_t \< \hU, iKA^0 \hU \>.
\]
Thus, taking the real part of the previous equation yields
\[
- \tfrac{1}{2} \xi \partial_t \< \hU, iKA^0 \hU \> + \xi^2 \< \hU, [KA^1]^s \hU \> = \Re \big( i \xi \< \hU, KL \hU \> \big) + \Re \big( i\xi^3 \< \hU, KB \hU \> \big).
\]
Since $L, B \geq 0$ and by symmetry, we obtain the estimate
\begin{equation}
\label{la7}
- \tfrac{1}{2} \xi \partial_t \< \hU, iKA^0 \hU \> + \xi^2 \< \hU, [KA^1]^s \hU \> \leq \epsilon \xi^2 |\hU|^2 + C_\epsilon \big( \< \hU, L\hU \> + \xi^4 \< \hU, B\hU \> \big),
\end{equation}
for any $\epsilon > 0$ and where $C_\epsilon > 0$ is a uniform constant depending only on $\epsilon > 0$, $|K L^{1/2}|$ and $|KB^{1/2}|$. Now multiply equation \eqref{la4} by $1 + \xi^2$, equation \eqref{la7} by $\delta > 0$ and add them up. The result is
\begin{equation}
\label{la8}
\begin{aligned}
\tfrac{1}{2} \partial_t &\left( (1 + \xi^2) \< \hU, A^0 \hU \> - \delta \xi \< \hU, iKA^0 \hU \> \right) +   \< \hU, L\hU \> + \xi^4 \< \hU, B \hU \> + \\ 
&\; + \xi^2 \left( \delta \< \hU, [KA^1]^s \hU \> + \< \hU, L\hU \> + \< \hU, B \hU \> \right) \\
&\leq \epsilon \delta \xi^2 |\hU|^2 + \delta C_\epsilon \big( \< \hU, L\hU \> + \xi^4 \< \hU, B \hU \>\big).
\end{aligned}
\end{equation}

Now define
\[
M:= \< \hU, A^0 \hU \> - \frac{\delta \xi}{1 + \xi^2} \< \hU, iKA^0 \hU \>.
\]
Notice that $M$ is real because $A^0$ is symmetric and $KA^0$ is skew-symmetric. Since $A^0 > 0$ there exists $C_0 > 0$ such that $\< \hU, A^0 \hU \> \geq C_0|\hU|^2$. It is then easy to show that there exists $\delta_0 > 0$, sufficiently small, such that if $0 < \delta < \delta_0$ then
\[
\frac{1}{C_1}|\hU|^2 \leq M \leq C_1 |\hU|^2,
\]
for some uniform $C_1 > 0$. 

Now from property (b) of the compensating function $K$ (see Definition \ref{defK}), there exists $\gamma > 0$ such that $\< \hU, ([KA^1]^s + L + B) \hU \> \geq \gamma |\hU|^2$. Therefore, by taking $0 <\delta < 1$ we arrive at
\[
\< \hU, (\delta [KA^1]^s + L + B) \hU \> \geq \delta \gamma |\hU|^2.
\]
Choose $\epsilon = \gamma/2$ and $0 < \delta < \min \{ 1, \delta_0, 1/C_\epsilon \}$ to obtain
\[
\tfrac{1}{2} \partial_t M + \tfrac{1}{2} \left( \frac{\xi^2}{1 + \xi^2} \right) \delta \gamma |\hU|^2 + \frac{(1 - \delta C_\epsilon)}{1 + \xi^2} \big( \< \hU, L\hU \> + \xi^4 \< \hU, B \hU \>\big) \leq 0.
\]
This yields
\[
\tfrac{1}{2} \partial_t M + \frac{2k \xi^2}{1 + \xi^2} M \leq 0,
\]
with $k = \tfrac{1}{2} \delta \gamma/C_1 > 0$. This inequality readily implies the desired estimate \eqref{est12}.
\end{proof}

\begin{theorem}[linear decay rates]
\label{thmlindecr}
Under the assumptions (i) - (iii) suppose that $U_0 \in H^s(\R) \cap L^1(\R)$, with $s \geq 2$. Then the solution to the Cauchy problem for linear system \eqref{lind} with $U(x,0) = U_0$ satisfies the decay rate
\begin{equation}
\label{lindec}
\| \partial_x^l U \|_{L^2}^2 \leq C \left( e^{-k t} \| \partial_x^l U_0 \|_{L^2}^2 + (1 + t)^{-(l + 1/2)} \| U_0 \|_{L^1}^2 \right),
\end{equation}
for $0 \leq l \leq s-1$ and some uniform $C > 0$.
\end{theorem}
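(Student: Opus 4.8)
The plan is to derive the decay rate \eqref{lindec} from the pointwise Fourier estimate \eqref{est12} by splitting the frequency domain into low and high frequencies and integrating. First I would apply the Plancherel theorem to write $\|\partial_x^l U(t)\|_{L^2}^2 = c\int_\R |\xi|^{2l} |\hU(\xi,t)|^2 \, d\xi$, and then insert the bound \eqref{est12}, obtaining
\[
\|\partial_x^l U(t)\|_{L^2}^2 \leq C \int_\R |\xi|^{2l} |\hU(\xi,0)|^2 \exp\left( - \frac{2k\xi^2 t}{1 + \xi^2}\right) d\xi.
\]
I would then split the integral at $|\xi| = 1$. On the high-frequency region $|\xi| \geq 1$ we have $\xi^2/(1+\xi^2) \geq 1/2$, so the exponential is bounded by $e^{-kt}$, and the remaining integral is controlled by $\|\partial_x^l U_0\|_{L^2}^2$ via Plancherel again; this produces the first term on the right-hand side of \eqref{lindec}.

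For the low-frequency region $|\xi| \leq 1$ the key point is that $\xi^2/(1+\xi^2) \geq \tfrac12 \xi^2$, so the exponential factor is bounded by $\exp(-k\xi^2 t)$. Since $U_0 \in L^1(\R)$, the Hausdorff-Young inequality (or the elementary bound $|\hU(\xi,0)| \leq (2\pi)^{-1/2}\|U_0\|_{L^1}$) gives $|\hU(\xi,0)|^2 \leq C\|U_0\|_{L^1}^2$ uniformly in $\xi$. Hence the low-frequency contribution is bounded by
\[
C \|U_0\|_{L^1}^2 \int_{|\xi|\leq 1} |\xi|^{2l} e^{-k\xi^2 t} \, d\xi \leq C \|U_0\|_{L^1}^2 \int_\R |\xi|^{2l} e^{-k\xi^2 t} \, d\xi.
\]
The substitution $\eta = \sqrt{t}\,\xi$ turns the last integral into $t^{-(l+1/2)}\int_\R |\eta|^{2l} e^{-k\eta^2}\, d\eta = C t^{-(l+1/2)}$, which for $t$ bounded away from zero is comparable to $(1+t)^{-(l+1/2)}$; for $t$ near zero one bounds the integral trivially by a constant and again absorbs it into $(1+t)^{-(l+1/2)}$. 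Combining the two regions yields \eqref{lindec} for all $0 \leq l \leq s-1$, the constraint $l \leq s-1$ ensuring $\partial_x^l U_0 \in L^2$.

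I expect no serious obstacle here: the argument is the standard low/high frequency decomposition used for the Shizuta-Kawashima class, and the only mild subtlety is bookkeeping the constants uniformly in $t \geq 0$ (in particular handling the small-$t$ regime so that the polynomial factor $(1+t)^{-(l+1/2)}$ rather than $t^{-(l+1/2)}$ appears). One should also note that existence and uniqueness of the solution $U$ in $C([0,\infty);H^s)$ with $\partial_x^l U$ bounded is standard for the linear constant-coefficient system \eqref{lind} with $A^0 > 0$, so the estimate \eqref{lindec} is meaningful; this can be invoked without proof.
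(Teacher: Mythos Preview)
Your proposal is correct and follows essentially the same approach as the paper's proof: both use Plancherel together with the pointwise Fourier bound \eqref{est12}, split at $|\xi|=1$, and exploit $\xi^2/(1+\xi^2)\geq \tfrac12$ for high frequencies (giving the $e^{-kt}\|\partial_x^l U_0\|_{L^2}^2$ term) and $\xi^2/(1+\xi^2)\geq \tfrac12\xi^2$ with $|\hU_0|\leq C\|U_0\|_{L^1}$ for low frequencies. The only cosmetic difference is that the paper handles the low-frequency integral by directly checking that $(1+t)^{l+1/2}\int_{-1}^1 \xi^{2l}e^{-k\xi^2 t}\,d\xi$ is bounded, whereas you extend the integral to $\R$ and scale; both are standard and equivalent.
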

\begin{proof}
Multiply estimate \eqref{est12} by $\xi^{2l}$ to obtain
\[
\int_\R \xi^{2l} |\hU(\xi,t)|^2 \, d\xi \leq C \int_{\R} \xi^{2l} |\hU(\xi,0)|^2  \exp \left( - \frac{2k \xi^2 t}{1 + \xi^2}\right) \, d\xi =: C(I_1(t) + I_2(t)),
\]
where $I_1$ denotes the integral on the right hand side computed on the set $\xi \in (-1,1)$ and $I_2$ is the integral on $|\xi| > 1$. Since $\xi^2/(1+\xi^2) \geq \tfrac{1}{2}\xi^2$ for $\xi \in (-1,1)$, we have the estimate
\[
\begin{aligned}
I_1(t) &= \int_{-1}^1 \xi^{2l} |\hU(\xi,0)|^2  \exp \left( - \frac{2k \xi^2 t}{1 + \xi^2}\right) \, d\xi \\&\leq \Big( \sup_{\xi \in \R } |\hU_0(\xi)|^2 \Big) \int_{-1}^1 \xi^{2l} e^{-k\xi^2 t} \, d\xi \\ 
&\leq \| U_0 \|_{L^1}^2 \int_{-1}^1 \xi^{2l} e^{-k\xi^2 t} \, d\xi.
\end{aligned}
\]
Using standard calculus tools it is easy to verify that
\[
A(t) := (1 + t)^{l + 1/2} \int_{-1}^1 \xi^{2l} e^{-k\xi^2 t} \, d\xi
\]
is continuous and uniformly bounded above for all $t \geq 0$. Therefore we arrive at
\[
I_1(t) \leq C (1 + t)^{-(l + 1/2)} \| U_0 \|_{L^1}^2,
\]
for some $C > 0$ and all $t \geq 0$. Now, if $\xi^2 \geq 1$ then clearly $\exp( -2k\xi^2 t /(1+\xi^2)) \leq e^{-kt}$. Together with Plancherel's theorem, this yields the estimate
\[
\begin{aligned}
I_2(t) &= \int_{|\xi| \geq 1} \xi^{2l} |\hU(\xi,0)|^2  \exp \left( - \frac{2k \xi^2 t}{1 + \xi^2}\right) \, d\xi \\&\leq e^{-kt} \int_\R \xi^{2l} |\hU_0(\xi)|^2 \, d\xi \\
&\leq e^{-kt} \|\partial_x^l U_0 \|_{L^2}^2.
\end{aligned}
\]
Combining both estimates we arrive at \eqref{lindec}.
\end{proof}

\begin{corollary}
\label{corCC}
Under hypotheses \eqref{H1} - \eqref{H3} for a compressible fluid, let $\bU = (\bro, \bru, \brt, 0)^\top \in \mathcal{V}$ be a constant equilibrium state. If $U_0 - \bU \in H^s(\R) \cap L^1(\R)$, with $s \geq 2$, is an initial perturbation (with finite energy and finite mass) of the equilibrium state $\bU$ then the solutions $U - \bU$ to the linearized equations around $\bU$ satisfy the decay estimates
\begin{equation}
\label{lindecU}
\| \partial_x^l (U - \bU) \|_{L^2}^2 \leq C \left( e^{-k t} \| \partial_x^l (U_0 - \bU) \|_{L^2}^2 + (1 + t)^{-(l + 1/2)} \| U_0 - \bU \|_{L^1}^2 \right),
\end{equation}
for $0 \leq l \leq s-1$ and some uniform $C, k > 0$. These linear decay rates hold for solutions to the linearization of both the viscous Cattaneo-Christov system \eqref{CCvisc} (for which $\nu > 0$) and the inviscid Cattaneo-Christov model \eqref{CCrel} (for which $\nu \equiv 0$).
\end{corollary}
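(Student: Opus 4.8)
The statement is an immediate consequence of Theorem~\ref{thmlindecr}, once one checks that its structural hypotheses (i)--(iii) are met by the symmetrized linearization of the Cattaneo-Christov system around the equilibrium state $\bU\in\mathcal{V}$. The plan is as follows. First I would perform the change of unknown $U\mapsto U-\bU$ and multiply the equations linearized at $\bU$ by the constant, symmetric, positive definite matrix $S(\bU)$ of \eqref{symm}, arriving exactly at system \eqref{lind} with coefficient matrices $A^0=\hat A^0(\bU)$, $A^1=\hat A^1(\bU)$, $B=\hat B(\bU)$ and $L=-\hat D(\bU)$ displayed in \eqref{defhA0}--\eqref{defhD}. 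By the symmetrizability lemma these four matrices are real symmetric, so hypothesis (i) holds; from the explicit diagonal form of $\hat A^0(\bU)$ together with \eqref{H1}--\eqref{H3} one reads off $A^0>0$, while $\hat B(\bU)\ge 0$ and $-\hat D(\bU)\ge 0$ are manifest, which gives (ii).

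For hypothesis (iii), the existence of a compensating function $K$ has already been exhibited explicitly: Lemma~\ref{lemviscK} provides such a $K$ in the viscous case $\nu>0$, and Lemma~\ref{remrelaxK} provides one in the purely thermally relaxed case $\nu\equiv 0$; alternatively, the genuine coupling established in Section~\ref{secgencoup} together with the Shizuta--Kawashima equivalence Theorem~\ref{theoequiv} yields the same conclusion. With (i)--(iii) verified, Theorem~\ref{thmlindecr} applies verbatim: the Fourier-side estimate \eqref{est12} holds, and from it the bound \eqref{lindec} follows for the symmetrized perturbation under the stated hypothesis $U_0-\bU\in H^s(\R)\cap L^1(\R)$, $s\ge 2$.

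It then remains only to transfer the estimate back to the original linearization around $\bU$. Since $S(\bU)$ is a fixed invertible matrix, the solution operator of \eqref{lind} and that of the original linearized system coincide, and the $L^2$ and $L^1$ norms of $U-\bU$ are comparable with or without the multiplicative factor $S(\bU)$; hence \eqref{lindec} translates into \eqref{lindecU} with uniform constants $C,k>0$. Because Lemmas~\ref{lemviscK} and \ref{remrelaxK} were proved for every equilibrium state $\bU\in\mathcal{V}$, and cover respectively the two regimes $\nu>0$ and $\nu\equiv 0$, the decay estimate holds simultaneously for the viscous system \eqref{CCvisc} and the inviscid system \eqref{CCrel}, as claimed.

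\textbf{Main obstacle.} There is no obstacle of substance here: the corollary is essentially an assembly of facts already proved (symmetrizability, genuine coupling, explicit compensating functions, and the abstract decay Lemma of Section~\ref{seclindec}). The only points requiring care are bookkeeping ones --- confirming that premultiplication by the constant symmetrizer $S(\bU)$ leaves hypotheses (i)--(ii) intact and does not affect the norms used in \eqref{lindecU}, and checking that the regularity/integrability assumption on $U_0-\bU$ matches precisely the input of Theorem~\ref{thmlindecr}.
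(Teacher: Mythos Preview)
Your proposal is correct and follows essentially the same route as the paper's own proof: verify that the symmetrized linearization \eqref{lind} satisfies the structural hypotheses (i)--(iii) via the symmetrizability lemma and the explicit compensating functions of Lemmas~\ref{lemviscK} and \ref{remrelaxK}, then invoke Theorem~\ref{thmlindecr}. Your additional remark about transferring the estimate through the constant invertible matrix $S(\bU)$ is harmless extra care, since in the paper the ``linearized equations around $\bU$'' are already taken to be the symmetrized system \eqref{lind}.
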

\begin{proof}
Both systems \eqref{CCvisc} and \eqref{CCrel} can be recast in the quasilinear symmetric form \eqref{nlsyst}, where the solutions are written as $U - \bU$, that is, as perturbations of the equilibrium state. Under hypotheses \eqref{H1} - \eqref{H3}, the coefficients $A^0$, $A^1$, $B$ and $L$ satisfy assumptions (i) - (iii), where the compensating function $K$ is given by \eqref{viscK} in the viscous case ($\overline{\nu} > 0$), and by \eqref{relaxK} in the pure thermal relaxation case ($\overline{\nu} \equiv 0$). Thus, the hypotheses of Theorem \ref{thmlindecr} are satisfied and any solution $U - \bU$ to the linearized system \eqref{lind} with initial condition $U_0 - \bU$ obeys the desired linear decay rates, as claimed.
\end{proof}

\section{Discussion}

In this paper we have shown that one-dimensional Cattaneo-Christov systems for compressible fluid flow are strictly dissipative. This property holds for the case in which viscous and thermal relaxation effects are combined, as well as for the case where viscosity is neglected and the only dissipation terms are due to thermal relaxation. We have proved strict dissipativity for these systems by verifying the genuine coupling condition, as well as by providing explicit forms for the compensating functions which allow, in turn, to establish energy estimates leading to the decay structure of solutions to the linearized problem around equilibrium states. 

In the process, we have shown, for instance, that Cattaneo-Christov systems in one dimension are symmetrizable. As we have pointed out, symmetrizability is a fundamental property in the theory. It is natural to ask whether multi-dimensional Cattaneo-Christov systems are strictly dissipative. With respect to this problem, it is important to remark, however, that not even the existence of a symmetrizer in several space dimensions is yet clear. As the seasoned reader might readily have noticed, the material derivative in Christov's constitutive law in more than one dimension prevents expression \eqref{symm} to be a symmetrizer for the multi-dimensional case. This is the subject of current investigations. 

Finally, even though the estimates performed to obtain the decay rates in Theorem \ref{thmlindecr} are very similar (at the linear level) to those for hyperbolic balance laws \cite{KY09} (see also \cite{KaTh83}), we call upon the attention of the reader that the statement of Corollary \ref{corCC} should not be taken for granted. For instance, the analyses pertaining to the local existence of solutions for viscous systems of conservation laws \cite{KaTh83,Ser10b}, the global existence of solutions for hyperbolic balance laws \cite{HaNa03,Y04}, as well as the global stability of constant equilibrium states for dissipative balance laws \cite{RuSe04}, they all consider the existence of a convex entropy structure which lacks in the present case because the system is not in conservation form. Therefore, the linear decay rates around equilibrium states for Cattaneo-Christov systems constitute the first step to show that constant equilibrium states are asymptotically stable under small perturbations even in the absence of a convex entropy.

\section*{Acknowledgements}

RGP thanks Jeffrey Humpherys for useful discussions. The work of FA was partially supported by CONACyT (Mexico), through a scholarship for doctoral studies, grant no. 465484. The work of RGP was partially supported by DGAPA-UNAM, program PAPIIT, grant IN-100318.

\def\cprime{$'$}





\end{document}